\setlist{leftmargin=6.5mm}
\setlist[enumerate]{leftmargin=8mm}
\newcommand{\Z}{\mathbb{Z}}
\newcommand{\Q}{\mathbb{Q}}
\renewcommand{\Re}{\operatorname{Re}}
\renewcommand{\Im}{\operatorname{Im}}
\DeclareMathOperator{\lk}{lk}
\newcommand{\sgn}{\operatorname{sgn}}
\newcommand{\sign}{\operatorname{sign}}
\newcommand{\nul}{\operatorname{null}}
\newcommand{\om}{\omega}
\newcommand{\bom}{\overline{\omega}}
\newcommand{\som}{\omega^{1/2}}
\newcommand{\bsom}{\omega^{-1/2}}
\newcommand{\ov}{\overline}
\newtheorem{thm}{Theorem}
\newtheorem{lem}{Lemma}
\newtheorem{prop}{Proposition}
\newtheorem{cor}{Corollary}
\theoremstyle{definition}
\newtheorem{definition}{Definition}
\newtheorem{ex}{Example}
\theoremstyle{remark}
\newtheorem*{rems}{Remarks}
\newtheorem{note}{Note}
 \title{A diagrammatic computation of abelian link invariants}
\author{David Cimasoni}
\address{David Cimasoni -- Section de math\'ematiques, Universit\'e de Gen\`eve, Suisse}
\email{david.cimasoni@unige.ch}
\author{Livio Ferretti}
\address{Livio Ferretti -- Section de math\'ematiques, Universit\'e de Gen\`eve, Suisse}
\email{livio.ferretti@unige.ch}
\author{Jessica Liu}
\address{Jessica Liu -- Department of Mathematics, University of Toronto, Canada}
\email{jessliu@math.toronto.edu}
\begin{document}

\makeatletter
   \providecommand\@dotsep{5}
 \makeatother

\begin{abstract}
We show how the multivariable signature and Alexander polynomial of a colored link can be computed from a
single symmetric matrix naturally defined from a colored link diagram. In the case of a single variable, it coincides with
the matrix introduced by Kashaev~\cite{Kas21}, which was recently proven to compute the Levine-Tristram signature
and the Alexander polynomial of oriented links~\cite{Liu23,CimFer23}. As a corollary, we obtain a multivariable extension
of Kauffman's determinantal model of the Alexander polynomial~\cite{Kau83}, recovering a result of Zibrowius~\cite{Zib17}.
\end{abstract}

\keywords{Link diagrams, multivariable signature, multivariable Alexander polynomial}

\subjclass{57K10}

\maketitle
	

	\section{Introduction}
	\label{sec:intro}
	
As its title suggests, the aim of the present article is to give a way of computing several classical link invariants directly from a diagram. Before specifying these invariants, let us mention that this story is best told in the context of {\em colored links\/}, that we now recall.

Given an integer~$\mu>0$, a{\em~$\mu$-colored link\/} is an oriented link~$L\subset S^3$ each of whose components is
endowed with a {\em color\/} in~$\{1,\dots,\mu\}$ in such a way that all these colors are used.
Two colored links are isotopic if they are related by an ambient isotopy which respects the orientation
and color of all components. Clearly, a~$1$-colored link is just an oriented link, while
a~$\mu$-component~$\mu$-colored link is an oriented ordered link.
A~$\mu$-colored link can be described by an oriented link diagram~$D$ with colored components, an object
which we will refer to as a {\em $\mu$-colored diagram\/} (see Figure~\ref{fig:example} for an example).
As usual, a crossing~$v$ of~$D$ is naturally endowed with a sign that we denote by~$\sgn(v)=\pm 1$, see Figure~\ref{fig:example}.
Finally, a crossing will be called {\em monochromatic\/} if the two corresponding strands are of the same color,
and {\em bichromatic\/} otherwise.

\begin{figure}[tbp]
    \centering
    \begin{picture}(200,130)
    \put(5, 5){\includegraphics[height = 4.25cm]{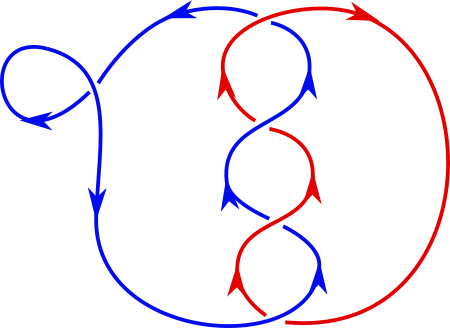}}
    \put(30,20){$L_1$}
    \put(165,20){$L_2$}
    \put(10,78){\scalebox{1.5}{$\bullet$}}
    \put(90,8){$\scriptstyle{1}$}
    \put(91,39){$\scriptstyle2$}
    \put(90,78){$\scriptstyle3$}
    \put(89,114){$\scriptstyle4$}
    \put(39,100){$\scriptstyle5$}
    \put(140,60){$a$}
    \put(105,23){$b$}
    \put(103,60){$c$}
    \put(101,97){$d$}
    \put(60,60){$e$}
    \put(17,92){$f$}
    \put(4,65){$g$}    
    \end{picture}
    \caption{A $2$-colored diagram~$D$ for a $2$-colored link~$L=L_1\cup L_2$.
    The crossings are labelled~1 through~5 and the regions are labelled~$a$ through~$g$.
    Crossings~$1$ to~$4$ are positive and bichromatic, while crossing~$5$ is negative and monochromatic.
    The marked point on~$L_1$ will serve a further purpose.}
    \label{fig:example}
\end{figure}

The invariants we are interested in computing are the classical abelian invariants of a~$\mu$-colored link~$L$, namely
its multivariable {\em signature\/} and {\em nullity\/}~$\sigma_L,\eta_L\colon (S^1\setminus\{1\})^\mu\to\Z$,
and its multivariable {\em Alexander polynomial\/}~$\Delta_L$ in the normalized form given by the {\em Conway function\/}~$\nabla_L$. In the case~$\mu=1$, these are the well-known {\em Levine-Tristram signature\/} and {\em nullity\/} and {\em Alexander-Conway polynomial\/}, without doubt among the most studied of link invariants. We refer to Section~\ref{sec:back}
for the definition of these classical objects.

\medskip

	\begin{figure}[tbp]
    \centering
    \begin{picture}(280,100)
    \put(32,66){$a$}
    \put(4,38){$b$}
    \put(32,16){$c$}
    \put(60,38){$d$}
    \put(-5,0){$j$}
    \put(72,0){$k$}
   \put(0,6){\includegraphics[width=2.5cm]{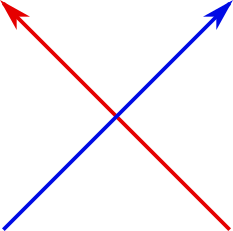}}
   \put(120,30){$\begin{array}{c|cccc}
   &a&b&c&d\\
   \hline
   a&x_{jk}&x_{j}&1&x_k\\
   b&x_j&2x_jx_k-x_{jk}&x_k&1\\
   c&1&x_k&x_{jk}&x_j\\
   d&x_k&1&x_j&2x_jx_k-x_{jk}
   \end{array}$}
   \end{picture}
    \caption{A crossing $v$ together with the corresponding~$4\times 4$ minor of~$\tau_v(x)$. The incoming left strand is of color~$j$, the incoming right strand of color~$k$, and the four adjacent regions are~$a, b, c,$ and~$d$.}
    \label{fig:minor}
\end{figure}

As our main result will show, these invariants can all be computed from a single symmetric matrix~$\tau_D(x)$,
whose coefficients are functions
of formal variables~$x=\{x_j,x_{jk}\mid 1\le j,k\le \mu\}$ indexed by (unordered pairs of) colors.
We now give its definition.

\begin{definition}
\label{def:tau}
Given a~$\mu$-colored diagram~$D$, let~$\tau_D(x)$ be the symmetric matrix with rows and columns indexed by the regions
of~$D$ defined by
\[
\tau_D(x)=\sum_v \frac{\sgn(v)}{\sqrt{1-x_j^2}\sqrt{1-x_k^2}}\tau_v(x)\,,
\]
where the sum is over all crossings of~$D$, the indices~$j,k\in\{1,\dots,\mu\}$ are the (possibly identical) colors of the two strands crossing at~$v$,
and the only non-vanishing coefficients of the matrix~$\tau_v(x)$ are given in Figure~\ref{fig:minor}.

Also, we shall denote by~$\widetilde\tau_D(x)$ the matrix obtained by removing the two rows and columns corresponding to two adjacent regions of~$D$ determined by a marked point on~$D$. We will assume without loss of generality that this point is on
a strand of color~$1$.
\end{definition}

Note that if the regions~$a,b,c,d$ around a crossing~$v$ are not all distinct, then one should add the corresponding rows and columns of~$\tau_v(x)$. This happens in the following example.

\begin{ex}
\label{ex:1}
Consider the~$2$-colored diagram~$D$ illustrated in Figure~\ref{fig:example}. Ordering the regions
alphabetically, the corresponding matrix is given by 
\begin{gather*}
	\tau_D(x)  = \frac{1}{\sqrt{1-x_1^2}\sqrt{1-x_2^2}}\left[\begin{array}{ccccccc} 	4(2x_1x_2-x_{12})& 2x_2 & 2x_1  & 2x_2 & 4 & 0 & 2x_1\\[2pt]
		2x_2 & 2x_{12} & 1 & 0 & 2x_1 & 0 & 1\\[2pt]
		2x_1 & 1 & 2x_{12} & 1 & 2x_2 & 0 & 0\\[2pt]
		2x_2 & 0 & 1 & 2x_{12} & 2x_1 & 0 & 1\\[2pt]
		4 & 2x_1 & 2x_2 & 2x_1 & 4(2x_1x_2-x_{12})& 0 & 2x_2\\[2pt]
		0 & 0 & 0 & 0 & 0 & 0 & 0\\[2pt]
		2x_1 & 1 & 0 & 1 & 2x_2 & 0 & 2x_{12}\end{array} \right]\\
		- \frac{1}{1-x_1^2} \left[\begin{array}{ccccccc} 	0&0&0&0&0&0&0\\[2pt]
		0&0&0&0&0&0&0\\[2pt]
		0&0&0&0&0&0&0\\[2pt]
		0&0&0&0&0&0&0\\[2pt]
		0&0&0&0&2x_1^2-x_{11}&1&2x_1\\[2pt]
		0&0&0&0&1&2x_1^2-x_{11}&2x_1\\[2pt]
		0&0&0&0&2x_1&2x_1&2x_{11}+2\end{array} \right]\,,
\end{gather*}	
where the first summand contains the contributions from the four (positive) bichromatic crossings and the second summand is the contribution from the (negative) monochromatic crossing.
	

\end{ex}

Here is our main result.

\begin{thm}
\label{thm}
Let~$D$ be an arbitrary~$\mu$-colored diagram for a~$\mu$-colored link~$L$.
\begin{enumerate}
\item For any~$\omega=(\omega_1,\dots,\omega_\mu)\in(S^1\setminus\{1\})^\mu$,  the signature and nullity of~$L$ are given by
\[
\sigma_L(\omega)=\textstyle{\frac{1}{2}}(\sign(\widetilde\tau_D(\omega))-w_{\mathrm{m}}(D))\quad\text{and}\quad\eta_L(\omega)=\textstyle{\frac{1}{2}}\nul(\widetilde\tau_D(\omega))\,,
\]
where~$w_{\mathrm{m}}(D)$ is the sum of the signs of all monochromatic crossings of~$D$, and~$\tau_D(\omega)$ stands for the evaluation of~$\tau_D(x)$ at
\[
x_j=\Re(\omega_j^{1/2})\,,\quad x_{jk}=\Re(\omega_j^{1/2}\omega_k^{1/2})\,.
\]
\item If~$D$ is connected, then the Conway function of~$L$ satisfies
\[
\nabla^2_L(t_1,\dots,t_\mu)=\frac{1}{(t_1-t_1^{-1})^2}\bigg(\prod_v-\sgn(v)\frac{t_j-t_j^{-1}}{2}\frac{t_k-t_k^{-1}}{2}\bigg)\cdot\det(\widetilde{\tau}_D(t^2))\,,
\]
where the product is over all crossings of~$D$, the indices~$j,k$ are the (possibly identical) colors of the two strands crossing at~$v$, and~$\tau_D(t^2)$ stands for the evaluation of~$\tau_D(x)$ at
\[
x_j=\frac{t_j+t_j^{-1}}{2}\,,\quad x_{jk}=\frac{t_jt_k+t_j^{-1}t_k^{-1}}{2}\,.
\]
\end{enumerate}
\end{thm}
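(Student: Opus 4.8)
The plan is to interpret $\widetilde\tau_D(\omega)$ and $\widetilde\tau_D(t^2)$ as diagrammatic incarnations of a genuinely topological pairing attached to a C-complex for $L$, and then to feed these into the existing C-complex formulas for the two families of invariants. Recall that the multivariable signature and nullity are computed (in the sense of Cimasoni--Florens) as the signature and nullity of a Hermitian matrix $H(\omega)$ assembled from the generalized Seifert forms $A^{\varepsilon}$, $\varepsilon\in\{\pm\}^\mu$, of a C-complex $\mathcal F=F_1\cup\dots\cup F_\mu$, and that the Conway function is, up to an explicit unit, a determinant of an associated Seifert-type matrix. The single-variable case of the present theorem --- Kashaev's matrix computing the Levine--Tristram signature and the Alexander--Conway polynomial --- is already known from \cite{Kas21,Liu23,CimFer23}, and will serve both as the template for the argument and as the source of its most delicate local computations.

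First I would construct a C-complex directly from the colored diagram $D$: apply Seifert's algorithm color by color to obtain surfaces $F_1,\dots,F_\mu$, arranging that the bichromatic crossings become the clasp intersections $F_j\cap F_k$. The next step is to express $\widetilde\tau_D$ in terms of the resulting forms $A^{\varepsilon}$. Here the region-based nature of $\tau_D$ is the crucial structural point: I expect $\widetilde\tau_D(\omega)$, as a \emph{real} symmetric matrix, to be congruent to the realification of the Hermitian form $H(\omega)$, which accounts for the factor $\tfrac12$ in part (i) (a complex Hermitian form of signature $s$ and nullity $n$ has realification of signature $2s$ and nullity $2n$) and for the square in part (ii). The local matrix $\tau_v(x)$ of Figure~\ref{fig:minor} would be matched, crossing by crossing, against the contribution of the corresponding clasp or Seifert band, with the prefactor $\tfrac{\sgn(v)}{\sqrt{1-x_j^2}\sqrt{1-x_k^2}}$ absorbing the normalization by $\omega_j^{1/2},\omega_k^{1/2}$ implicit in $H(\omega)$.

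With this congruence in hand, part (i) follows by comparing signatures and nullities, the correction $w_{\mathrm m}(D)$ arising as the Gordon--Litherland-type defect contributed by the monochromatic crossings (those lying on a single $F_j$ rather than on a clasp). For part (ii) I would feed the same congruence into Cimasoni's determinantal formula for $\nabla_L$, so that $\det\widetilde\tau_D(t^2)$ reproduces $\nabla_L^2$ after dividing by the normalization unit $(t_1-t_1^{-1})^2$ and multiplying by the crossing product $\prod_v-\sgn(v)\tfrac{t_j-t_j^{-1}}{2}\tfrac{t_k-t_k^{-1}}{2}$, which is precisely the total of the local normalizing factors; the connectivity hypothesis guarantees that removing the marked pair of regions reduces the presentation to one of the correct rank.

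The main obstacle will be establishing this congruence explicitly and correctly: matching the region coordinates of $\tau_D$ to a basis of the relevant homology, verifying that the local matrices glue to the realification of $H(\omega)$ rather than merely agreeing crossing-by-crossing, and keeping rigorous track of every normalization --- the branch choices for $\omega_j^{1/2}$, the sign of each crossing, and the monochromatic defect $w_{\mathrm m}(D)$. An alternative, should the direct congruence prove unwieldy, is to bypass the C-complex and prove invariance of the right-hand sides under the colored Reidemeister moves, reducing the theorem to a finite check together with the single-variable case; but verifying invariance under the bichromatic moves, where two colors interact, is likely to be just as demanding as the congruence itself.
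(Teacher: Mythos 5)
Your overall plan --- realize $\widetilde\tau_D$ as the matrix of a pairing on a C-complex and feed it into the Cimasoni--Florens formulas --- is the right family of ideas, but the specific mechanism you propose for the factor $\tfrac12$ cannot work. You claim that $\widetilde\tau_D(\omega)$ is congruent to the \emph{realification} of the Hermitian matrix $H(\omega)$ of a C-complex for $L$ itself. Congruence preserves size, so this would force the size of $\widetilde\tau_D$ (which, for a connected diagram, is the number $n$ of crossings) to equal twice the first Betti number of your Seifert-algorithm C-complex; already for the standard trefoil diagram this reads $3=4$, and indeed a $3\times 3$ real matrix cannot be the realification of anything, since realifications have even size. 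The signatures fail too: for the right trefoil at $\omega=-1$, the realification of $H(\omega)$ has signature $-4$, whereas Theorem~\ref{thm} forces $\sign(\widetilde\tau_D(-1))=2\sigma_L(-1)+w_{\mathrm{m}}(D)=-1$; and since the realification is strictly \emph{larger} than $\widetilde\tau_D$, no appended ``Gordon--Litherland-type defect'' block can repair this --- it would need negative size. So the congruence you hope to establish is false, not merely delicate, and the same dimension mismatch undermines the proposed source of the square in part (ii).

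The missing idea is a doubling trick. The paper never builds a C-complex for $L$: it builds one for a connected sum $L\#_1 rL$ of $L$ with its orientation-reverse, obtained by drawing a pushed-off reversed copy of $D$ just above the original diagram, clasping the two copies at bichromatic crossings and applying Seifert's algorithm at monochromatic ones. This C-complex deformation retracts onto a graph whose cycles are indexed exactly by the regions of $D$ (minus the two at the connected-sum site) together with the monochromatic crossings; in this basis $H(\omega)$ (of the \emph{double}) is a block matrix whose monochromatic block $Z$ is diagonal with signature $-w_{\mathrm{m}}(D)$ and zero nullity, and whose Schur complement $X-YZ^{-1}Y^*$ becomes, after an explicit diagonal change of basis, a positive multiple of $\widetilde\tau_D(\omega)$. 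The factor $\tfrac12$ then comes from $\sigma_{L\#_1 rL}=2\sigma_L$ and $\eta_{L\#_1 rL}=2\eta_L$ (additivity under connected sum plus invariance under orientation reversal), not from realification; likewise in part (ii) the square arises from $\nabla_{L\#_1 rL}=(t_1-t_1^{-1})\nabla_L\nabla_{rL}$ combined with the determinantal formula~\eqref{eq:Conway} applied to the doubled C-complex, where connectedness of $D$ is needed precisely so that this C-complex is connected (as \eqref{eq:Conway} requires), not to ``fix the rank of a presentation.'' Without this doubling construction --- or a fully worked-out Reidemeister-invariance argument, which you rightly flag as no easier --- the proof does not go through.
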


Several remarks are in order.

\begin{rems}
\label{rem:1}
\begin{enumerate}
\item We need to fix one square root of each coordinate~$\omega_j\in S^1\setminus\{1\}$ of~$\omega$: our choice is to take~$\omega_j = e^{i\theta_j}$ with $\theta_j\in (0,2\pi)$, and $\omega_j^{1/2} = e^{i\theta_j/2}$. 
In other words,~$\omega_j^{1/2}$ denotes the unique square root such that~$\Im(\omega_j^{1/2})$ lies in~$(0,1]$. In particular, we have $(\bar{\omega}_j)^{1/2} = -(\overline{\omega_j^{1/2}})$, and $\sqrt{1-x_j^2} = \Im(\omega_j^{1/2})$.
Note that~$x_j^2\neq 1$,
so~$\tau_D(\omega)$ is a well-defined symmetric {\em real\/} matrix.

\item When defining~$\tau_D(t^2)$ in the second point of Theorem~\ref{thm}, we need to explain how we evaluate $\sqrt{1-x_j^2}$, since there is a sign ambiguity: we set $\sqrt{1-x_j^2} = \frac{t_j^{-1}-t_j}{2i}$. (See also Note~\ref{note:change_variables}.)

\item In both points of Theorem~\ref{thm}, the evaluations of the formal variables satisfy~$x_{jj}=2x_j^2-1$
for all~$j$. Therefore, if a crossing~$v$ is monochromatic, then the matrix~$\tau_v(x)$ can be written in a simple form
which only depends on the single variable~$x_j$.

\item In particular, if~$\mu=1$, then~$\tau_D(x)$ depends on a single variable. This matrix was first introduced by Kashaev in~\cite{Kas21}, see discussion below.

\item In principle, it would be enough to only define the matrix~$\tau_D(t^2)$ to state and prove Theorem~\ref{thm}. The reason we chose to introduce the matrix $\tau_D(x)$ is merely historical: in doing so, we explicitly present our results as an extension of Kashaev's work.

\item In~\cite{Kas21}, Kashaev studied the effect of Reidemeister moves on the matrix~$\tau_D(x)$ in the single-variable case, proving that a certain \textit{modified S-equivalence} class of the matrix is a link invariant.
We expect a similar result to hold in the multivariable setting. 

\item 
As it will become apparent from the proof of the second point, our construction naturally produces the square of the Conway function, so we cannot hope to recover its sign. (See also Note~\ref{note:sign}.)
\end{enumerate}
\end{rems}

\begin{ex}
\label{ex:2}
Consider once again the~$2$-colored link illustrated in Figure~\ref{fig:example}. From the corresponding matrix~$\tau_D(x)$ given in Example~\ref{ex:1}, we compute 
\begin{equation*}
	\footnotesize
	\setlength{\arraycolsep}{-1pt}
	\medmuskip = 2mu
	\widetilde{\tau}_D(t^2) = \frac{-4}{(t_1-t_1^{-1})(t_2-t_2^{-1})}\left[\begin{array}{ccccc} 	2(t_1t_2^{-1}+t_1^{-1}t_2)& t_2+t_2^{-1} & t_1+t_1^{-1} & t_2+t_2^{-1} & 4 \\[2pt]
		t_2+t_2^{-1} & t_1t_2+t_1^{-1}t_2^{-1} & 1 & 0 & t_1+t_1^{-1} \\[2pt]
		t_1+t_1^{-1} & 1 & t_1t_2+t_1^{-1}t_2^{-1} & 1 & t_2+t_2^{-1} \\[2pt]
		t_2+t_2^{-1} & 0 & 1 & t_1t_2+t_1^{-1}t_2^{-1} & t_1+t_1^{-1} \\[2pt]
		4 & t_1+t_1^{-1} & t_2+t_2^{-1} & t_1+t_1^{-1} & 2(t_1t_2^{-1}+t_1^{-1}t_2)-\frac{t_2-t_2^{-1}}{t_1-t_1^{-1}}\end{array} \right]
\end{equation*}
and obtain $$\det(\widetilde{\tau}_D(t^2)) = -\left( \frac{-4}{(t_1-t_1^{-1})(t_2-t_2^{-1})} \right)^5(t_1-t^{-1}_1)(t_2-t_2^{-1})(t_1t_2+t_1^{-1}t_2^{-1})^2\,.$$
In order to compute $\sign(\widetilde\tau_D(\omega))$, we evaluate~$\widetilde{\tau}_D(t^2)$ at~$t_i =\som_i$, and recall that the signature can change value only when the nullity changes value. Since~$\Im(\omega_j^{1/2}) \in (0,1]$, the computation of the determinant immediately implies that the nullity of~$\widetilde\tau_D(\omega)$
vanishes on the complement of $\Sigma:=\{(\omega_1,\omega_2)\in(S^1\setminus\{1\})^2\mid \omega_1\omega_2=-1\}$, so its signature is constant on the connected components of this complement.
Then, an easy but tedious computation of minors shows that the nullity of~$\widetilde\tau_D(\omega)$ is constant equal to~2 on~$\Sigma$, so the signature is also constant along each component of~$\Sigma$. In particular, Theorem~\ref{thm} implies that $\eta_L(\omega) = 1$ if $\omega_1\omega_2 = -1$, and $\eta_L(\omega) = 0$ otherwise. As for the signature, the values of $\sign(\widetilde\tau_D(\omega))$ can now be computed by picking one point in each of those connected components. For example, taking $\omega_1 = \omega_2 = -1$ we obtain 
\begin{equation*}
	\widetilde{\tau}_D(\omega) = \left[\begin{array}{ccccc}
	4 & 0 & 0 & 0 & 4\\[2pt]
	0 & -2 & 1 & 0 & 0 \\[2pt]
	0 & 1 & -2 & 1 & 0\\[2pt]
	0 & 0 & 1 & -2 & 0\\[2pt]
	4 & 0 & 0 & 0 & 3\\[2pt]	\end{array} \right]
\end{equation*}
from which we compute $\sign(\widetilde\tau_D(\omega)) = -3$ and hence, by Theorem~\ref{thm}, $\sigma_L(\omega) = -1$. The other values of $\sigma_L$ can be computed in a similar way. The result can be summarized in the formula $\sigma_L(\omega_1,\omega_2)=-\sign\left[\Re((1-\omega_1)(1-\omega_2))\right]$, and is plotted below, where the domain $(S^1\setminus\{1\})^2$ is represented as a square.
\begin{figure}[h]
    \centering
    \begin{picture}(80,80)
    \put(0,0){\includegraphics[width=2.5cm]{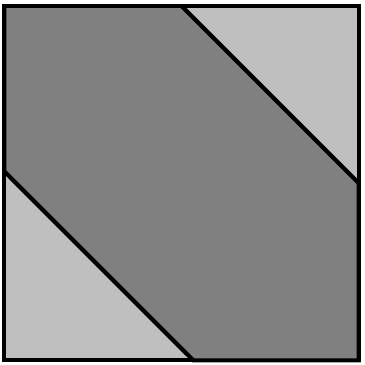}}
    \put(10,10){1}
    \put(58,58){1}
    \put(33,33){-1}
    \put(-4,18){\scalebox{1.5}{$\nearrow$}}
    \put(62,45.5){\scalebox{1.5}{$\swarrow$}}
    \put(-10, 10){0}
    \put(78,55){0}
       \end{picture}
\end{figure}

\noindent Finally, we also get~$\pm\nabla_L(t_1,t_2)=t_1t_2+t_1^{-1}t_2^{-1}$.
\end{ex}

The first point of this theorem provides a practical new way of computing multivariable signatures, but it also yields
much simpler proofs of known properties of this invariant. For example, consider the following situation: let~$L$ be the~$(\mu-1)$-colored link obtained from a~$\mu$-colored link~$L'$ by identifying the colors~$\mu-1$ and~$\mu$; then, for all~$(\omega_1,\dots,\omega_{\mu-1})\in(S^1\setminus\{1\})^{\mu-1}$, we have
the equality
\[
\sigma_{L}(\omega_1,\dots,\omega_{\mu-1})=\sigma_{L'}(\omega_1,\dots,\omega_{\mu-1},\omega_{\mu-1})-\sum\lk(K_{\mu-1},K_{\mu})\,,
\]
the sum being over all components~$K_{\mu-1}\subset L$ of color~$\mu-1$ and all components~$K_{\mu}\subset L$ of color~$\mu$. The original proof of this fact is rather tedious, see~\cite[Proposition~2.5]{CimFlo08}. It is an amusing exercise
to check that this fact immediately follows from Theorem~\ref{thm}.
In particular, given a~$\mu$-component~$\mu$-colored link~$L'=K_1\cup\dots\cup K_\mu$, 
the underlying~$1$-colored link~$L$ satisfies the equality
\[
\xi(L):=\sigma_L(-1)+\sum_{i<j}\lk(K_i,K_j)=\sigma_{L'}(-1,\dots,-1)\,.
\]
This, together with the straightfoward~\cite[Proposition~2.8]{CimFlo08}, yields a one line proof of the main result of~\cite{Mur70}: the integer~$\xi(L)$ does not depend on the orientation of the components of~$L$.

\medskip

The second point of this theorem implies a corollary that we now present.
Given a connected colored diagram~$D$, let~$K_D$ be the matrix whose rows are indexed by the crossings of~$D$,
whose columns are indexed by the regions of~$D$, and whose coefficients are defined by the label of the corners in Figure~\ref{fig:labels}. (If a region abuts a corner from two sides, then the corresponding labels should be added.) Finally, let~$\widetilde K_D$ denote the square matrix obtained from~$K_D$ by removing two columns corresponding to two adjacent regions (separated by a strand of color~$1$).

	\begin{figure}[tbp]
    \centering
   \begin{picture}(100,100)
    \put(24,80){\small$\left(t_j^{1/2}t_k^{1/2}\right)^s$}
    \put(-17,45){\small$\left(t_j^{1/2}t_k^{-1/2}\right)^s$}
    \put(17,8){\small$\left(t_j^{-1/2}t_k^{-1/2}\right)^s$}
    \put(60,45){\small$\left(t_j^{-1/2}t_k^{1/2}\right)^s$}
    \put(-5,0){$j$}
    \put(91,0){$k$}
   \put(0,6){\includegraphics[width=3.2cm]{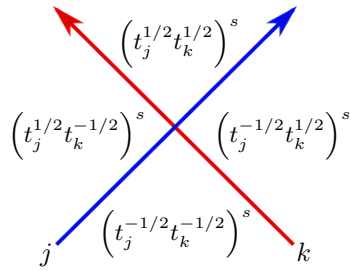}}
   \end{picture}
    \caption{The labels in the definition of~$K_D$ around a vertex~$v$ with~$s=\sgn(v)$.}
    \label{fig:labels}
\end{figure}

\begin{cor}
\label{cor}
If~$D$ is a connected diagram for a colored link~$L$, then
\[
\det\widetilde K_D=\pm (t_1-t_1^{-1})\nabla_L(t_1,\dots,t_\mu)\,.
\]
\end{cor}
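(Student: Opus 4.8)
The engine of the proof is a single algebraic observation relating $K_D$ to $\tau_D(t^2)$. First I would record the local factorization of the matrix in Figure~\ref{fig:minor}: writing $u_v=(t_j^{1/2}t_k^{1/2},\,t_j^{1/2}t_k^{-1/2},\,t_j^{-1/2}t_k^{-1/2},\,t_j^{-1/2}t_k^{1/2})$ for the four labels of Figure~\ref{fig:labels} at a positive crossing $v$, and $\overline{u}_v$ for the same vector with $t\mapsto t^{-1}$, a direct entrywise check against Figure~\ref{fig:minor} (using $x_j=\tfrac{t_j+t_j^{-1}}2$, $x_{jk}=\tfrac{t_jt_k+t_j^{-1}t_k^{-1}}2$) gives $\tau_v(t^2)=\tfrac12\big(u_vu_v^{\mathsf T}+\overline{u}_v\,\overline{u}_v^{\mathsf T}\big)$. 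Since the $v$-th row of $K_D$ is exactly $u_v$ (resp.\ $\overline{u}_v$) at a positive (resp.\ negative) crossing, summing over $v$ with the weights of Definition~\ref{def:tau} yields the global identity
\[
\tau_D(t^2)=K_D^{\mathsf T}EK_D+\overline{K_D}^{\mathsf T}E\,\overline{K_D},\qquad E=\mathrm{diag}\Big(\tfrac{\sgn(v)}{2\,s_js_k}\Big)_v,\quad s_\ell:=\tfrac{t_\ell^{-1}-t_\ell}{2i},
\]
where $\overline{K_D}$ is $K_D$ with $t\mapsto t^{-1}$ and $E$ is invariant under this conjugation. Deleting the two marked adjacent regions turns this into $\widetilde\tau_D(t^2)=\widetilde K_D^{\mathsf T}E\widetilde K_D+\widetilde{\overline{K_D}}^{\mathsf T}E\,\widetilde{\overline{K_D}}$, with all three matrices now square of size $n$ (here $n$ is the number of crossings and $n+2$ the number of regions).

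With this in hand the Corollary reduces to the single determinant identity
\[
(\det\widetilde K_D)^2=\Big(\prod_v\sgn(v)\,s_js_k\Big)\det\widetilde\tau_D(t^2). \tag{$\star$}
\]
Indeed, the product appearing in Theorem~\ref{thm}(2) is precisely $\prod_v\sgn(v)s_js_k$, so dividing Theorem~\ref{thm}(2) by~$(\star)$ gives $(\det\widetilde K_D)^2=(t_1-t_1^{-1})^2\nabla_L^2$, whence $\det\widetilde K_D=\pm(t_1-t_1^{-1})\nabla_L$. Thus the whole content is to prove~$(\star)$.

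To establish~$(\star)$ I would apply the Cauchy--Binet formula to the $2n\times n$ matrix $\widetilde{\mathcal K}=\binom{\widetilde K_D}{\widetilde{\overline{K_D}}}$ with diagonal weights $\mathcal E=\mathrm{diag}(E,E)$, giving $\det\widetilde\tau_D=\sum_{|S|=n}(\det\widetilde{\mathcal K}_S)^2\prod_{i\in S}\mathcal E_{ii}$, the sum being over $n$-subsets of the $2n$ rows. The ``diagonal'' selections that pick exactly one of the two rows attached to each crossing all carry the weight $\det E$ and reproduce the Kauffman-type state sum; together with the palindromic symmetry $\det\widetilde{\overline{K_D}}=\pm\det\widetilde K_D$ (the $t\mapsto t^{-1}$ symmetry of the state sum, itself provable directly from the definition of $K_D$) these assemble into the right-hand side of~$(\star)$ after using $\prod_v\sgn(v)s_js_k=(2^n\det E)^{-1}$. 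The main obstacle is the remaining ``off-diagonal'' selections, in which some crossing contributes both of its rows and another contributes none: these must cancel in the end. I expect this cancellation to be the crux and to be genuinely diagrammatic rather than formal -- a quick numerical check shows that the identity analogous to~$(\star)$ is \emph{false} for generic matrices $\widetilde K_D,\widetilde{\overline{K_D}},E$, so the proof must use the planar incidence structure of the diagram, i.e.\ the combinatorics of Kauffman states on $D$ (the mechanism behind Kauffman's Clock Theorem and the classical identification of the state sum with the Alexander polynomial, and its multivariable refinement due to Zibrowius~\cite{Zib17}).

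Finally, both sides of the Corollary are Laurent polynomials determined only up to multiplication by a unit $\pm t^{\mathbb Z}$, so after~$(\star)$ is proved I would pin down the overall sign and monomial by comparing a single controlled evaluation -- for instance the leading behaviour as one variable degenerates, or a direct check on a simple connected diagram -- against the normalization of the Conway function; the computation in Example~\ref{ex:2}, where $\det\widetilde{\tau}_D(t^2)$ is an explicit perfect square times the expected unit and $\nabla_L=\pm(t_1t_2+t_1^{-1}t_2^{-1})$, serves as a consistency test for this last step.
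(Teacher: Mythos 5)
Your setup is correct and in fact reproduces the paper's own intermediate steps: the entrywise identity $\tau_v(t^2)=\tfrac12\bigl(u_vu_v^{\mathsf T}+\overline{u}_v\,\overline{u}_v^{\mathsf T}\bigr)$ is precisely the paper's equation~\eqref{eq:symmetry}, and summing over crossings gives $\tau_D(t^2)=\tfrac12\bigl(K^{\mathrm{T}}SK+\varphi(K^{\mathrm{T}}SK)\bigr)$ with $S=2E$, which is exactly where the paper's proof of Lemma~\ref{lem:factorization} arrives. Your reduction of the Corollary to the determinant identity~$(\star)$ is also the paper's reduction (though your final paragraph is superfluous: once $(\det\widetilde K_D)^2=(t_1-t_1^{-1})^2\nabla_L^2$ holds in the integral domain $\Q(t)$, the conclusion $\det\widetilde K_D=\pm(t_1-t_1^{-1})\nabla_L$ is immediate, and the $\pm$ is already in the statement, so there is no residual monomial to pin down). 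The genuine gap is the proof of~$(\star)$ itself, and you acknowledge it: in your Cauchy--Binet expansion the off-diagonal row selections must cancel, you correctly observe that this cannot be a formal consequence of the two-term decomposition (it fails for generic $K_D$, $\overline{K_D}$, $E$), and you then defer the required diagrammatic input to the Clock Theorem and Kauffman-state combinatorics without executing it. That unproven cancellation is the entire content of the result. Note also that your parenthetical claim that $\det\widetilde{\overline{K_D}}=\pm\det\widetilde K_D$ is ``provable directly from the definition'' is itself essentially the symmetry of the Alexander polynomial and is not a triviality.

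The missing idea --- and the route the paper takes in Lemma~\ref{lem:symmetry} --- is to prove that $K^{\mathrm{T}}SK$ is already invariant under the involution $\varphi\colon t_i\mapsto t_i^{-1}$. Granting this, your two-term decomposition collapses, since the two summands coincide: one gets $\tau_D(t^2)=K^{\mathrm{T}}SK$ on the nose (Lemma~\ref{lem:factorization}), and then $(\star)$ follows from plain multiplicativity of the determinant, with no Cauchy--Binet and no state-sum cancellation at all. The planarity you anticipated does enter, but as a symmetry statement rather than as a cancellation of minors: the paper verifies the $\varphi$-invariance coefficient by coefficient, the nontrivial cases being two regions of opposite checkerboard color sharing an edge (an antisymmetry computation combining the two crossings at the ends of that edge) and the diagonal coefficients, which require walking around the boundary of a region and a sign-bookkeeping argument (the identity $d_{-\beta}=d_\beta$, reduced in the end to the fact that the boundary orientation of a region reverses an even number of times, half in each direction). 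If you want to salvage your plan, proving this $\varphi$-symmetry first is also the cleanest way to organize the cancellations your Cauchy--Binet expansion would otherwise demand.
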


\begin{note}
\label{note:sign}
	In fact, what we get from Corollary~\ref{cor} is the symmetrized Alexander polynomial: since the sign of the determinant depends on the order of the rows and columns of $\widetilde K_D$, i.e. on the numbering of the regions and crossings of $D$, we cannot determine the sign of the Conway function.
\end{note}

\begin{ex}
\label{ex:3}
Consider one last time the~$2$-colored link illustrated in Figure~\ref{fig:example}. The matrix~$K_D$ is given by
\begin{equation*}
	\left[\begin{array}{ccccccc}
		t_1^{-1/2}t_2^{1/2} & t_1^{1/2}t_2^{1/2} & 0 & 0 & t_1^{1/2}t_2^{-1/2} & 0 & t_1^{-1/2}t_2^{-1/2}\\[3pt]
		t_1^{1/2}t_2^{-1/2} & t_1^{-1/2}t_2^{-1/2} & t_1^{1/2}t_2^{1/2} & 0 & t_1^{-1/2}t_2^{1/2} & 0 &0\\[3pt]
		t_1^{-1/2}t_2^{1/2} & 0 & t_1^{-1/2}t_2^{-1/2} & t_1^{1/2}t_2^{1/2} & t_1^{1/2}t_2^{- 1/2} & 0 & 0\\[3pt]
		t_1^{1/2}t_2^{-1/2} & 0 & 0 & t_1^{-1/2}t_2^{-1/2} & t_1^{-1/2}t_2^{1/2} & 0 & t_1^{1/2}t_2^{1/2}\\[3pt]
		0 & 0 & 0 & 0 & 1 & 1 & t_1+t_1^{-1}\end{array} \right]\,,
\end{equation*}
from which we compute once again~$\pm\nabla_L(t_1,t_2)=t_1t_2+t_1^{-1}t_2^{-1}$.
\end{ex}

Let us now put our results in the context of the preexisting literature.

In 2018, Kashaev~\cite{Kas21} defined the matrix~$\tau_D(x)$ in the case~$\mu=1$, and conjectured Theorem~\ref{thm} in this special case. Recently, the first two named-authors~\cite{CimFer23} provided a proof of the second part of
this conjecture by establishing
a connection with Kauffman's determinantal model of the Alexander polynomial~\cite{Kau83}; they also
proved the first part of the Kashaev conjecture in a very restrictive case and indirect way.  Immediately afterwards, the third named-author~\cite{Liu23} gave a complete proof of the
conjecture. Joining our efforts, we now extend the approach of~\cite{Liu23} to the general multivariable case in Theorem~\ref{thm} and in its proof.

As for Corollary~\ref{cor}, in the case~$\mu=1$ it is nothing but Kauffman's aforementioned model for the Alexander polynomial~\cite{Kau83}. Interestingly, Kauffman did state a multivariable version of his model (a detailed proof was only given many years later by Sato~\cite{Sat11}), but it is different from our model. However, Zibrowius~\cite{Zib19} gave a state sum model for the multivariable Conway function which uses the same labels as the ones of Figure~\ref{fig:labels} up to a sign; using an extension of Kauffman's {\em Clock Theorem\/}~\cite{Kau83}, this state sum model can be
turned into a determinantal model which coincides with the one of Corollary~\ref{cor}.
This latter fact can be found in Zibrowius's PhD thesis~\cite[Chapter I.4]{Zib17} (but not in \cite{Zib19}). Therefore, and even though our proof is completely different, Corollary~\ref{cor} is not a new result in the strict sense.

Let us finally mention that Friedl-Kausik-Quintanilha~\cite{Fri22} recently provided an algorithm for the computation of generalized Seifert matrices (see Section~\ref{sub:Seifert}) for colored links given as closures of colored braids. Since such matrices can be used to define~$\sigma_L,\eta_L$ and~$\nabla_L$, this method yields an algorithmic computation of these invariants. However,
the remarkable feature of Theorem~\ref{thm} remains: a new way of computing these invariants from a single symmetric matrix obtained directly from a colored diagram.

\medskip

This paper is organised as follows. In Section~\ref{sec:back}, we recall the necessary background
on generalized Seifert matrices (Section~\ref{sub:Seifert}),  multivariate signatures of colored links (Section~\ref{sub:sign}), and the Conway function (Section~\ref{sub:Conway}). Section~\ref{sec:proof} contains the proof of our results, namely the first and second points of Theorem~\ref{thm} in Section~\ref{sub:proof-sign} and~\ref{sub:proof-Conway}, respectively, and of Corollary~\ref{cor} in Section~\ref{sub:Kauffman}.
A slightly informal last Section~\ref{sub:module} contains results on the Alexander module.

\subsection*{Acknowledgments}
We thank Claudius Zibrowius for informing us of his work on the Kauffman model,
and the anonymous referee for very valuable suggestions.
DC and LF are supported by the Swiss NSF grant 200021-212085. JL is partially supported by NSERC CGS-D.

	\section{Background}
	\label{sec:back}
	
The aim of this section is to briefly recall the necessary background for our work: we start in Section~\ref{sub:Seifert} with the definition of C-complexes and generalized Seifert forms, then move on to multivariate signatures in Section~\ref{sub:sign},  before dealing with the Conway function in Section~\ref{sub:Conway}.

	\subsection{Generalized Seifert surfaces and matrices}
	\label{sub:Seifert}
	
Seifert surfaces and matrices are well known tools in the construction and study of (single-variable) abelian link invariants,
such as the Levine-Tristram signature and the Alexander polynomial. Less well-known is the fact that multivariate invariants can be defined and studied via generalized Seifert surfaces, known as C-complexes. We now introduce these objects, following~\cite{Coo82, Cim04}.
	
To do so, we will use the notation~$L=L_1\cup\dots\cup L_\mu$ for a~$\mu$-colored link,
where~$L_i$ is the sublink of~$L$ consisting of all the components of color~$i$.
	
\begin{figure}[tbp]
    \centering
   \includegraphics[width=8cm,angle=180]{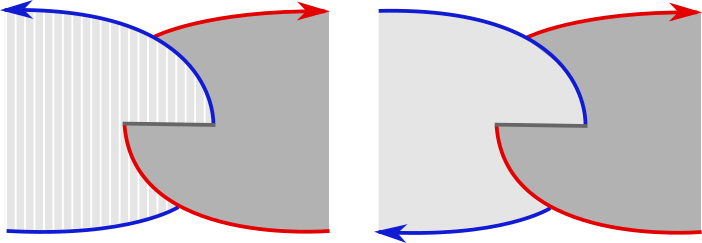}

    \caption{A positive clasp intersection (left), and a negative one (right).}
    \label{fig:clasp}
\end{figure}

\begin{definition}
\label{def:C-cplx}
A {\em C-complex\/} for a~$\mu$-colored link~$L=L_1\cup\dots\cup L_\mu$ is a union~$S=S_1\cup\dots\cup S_\mu$ of surfaces embedded in~$S^3$ satisfying the following conditions:
\begin{enumerate}
    \item for all~$i$, the surface~$S_i$ is a (possibly disconnected) Seifert surface for~$L_i$;
    \item for all~$i\neq j$, the surfaces~$S_i$ and~$S_j$ are either disjoint or intersect in a finite number of {\em clasps\/}, see Figure~\ref{fig:clasp};
    \item for all~$i,j,k$ pairwise distinct, the intersection~$S_i\cap S_j\cap S_k$ is empty.
\end{enumerate}
\end{definition}

Note that a C-complex for a~$1$-colored link~$L$ is nothing but a (possibly disconnected) Seifert surface for the oriented link~$L$.
The existence of a C-complex for any given colored link is easy to establish~\cite{Cim04}. On the other hand, the corresponding notion of S-equivalence is more difficult to prove, see~\cite{DMO} for the recently corrected statement.

These C-complexes allow us to define {\em generalized Seifert forms\/} as follows. For any choice of signs~$\varepsilon=(\varepsilon_1,\dots,\varepsilon_\mu)\in\{\pm 1\}^\mu$, let
\[
\alpha^\varepsilon\colon H_1(S)\times H_1(S)\longrightarrow\mathbb{Z}
\]
be the bilinear form given by~$\alpha^\varepsilon(x,y)=\lk(x^\varepsilon,y)$,
where~$x^\varepsilon$ denotes a well-chosen representative of the homology class~$x\in H_1(S)$ pushed-off~$S_i$ in the~$\varepsilon_i$-normal direction (see~\cite{CimFlo08} for a more formal definition). We denote by~$A^\varepsilon$ the corresponding {\em generalized Seifert matrices\/}, defined with respect to a fixed basis of~$H_1(S)$.
One easily checks the equality
\begin{equation}
\label{eq:A-sym}
A^{-\varepsilon}=(A^\varepsilon)^{\mathrm{T}}
\end{equation}
for all~$\varepsilon\in\{\pm 1\}^\mu$.
Note that the two generalized Seifert matrices~$A^-,A^+$ of a~$1$-colored link~$L$ coincide with the usual Seifert matrix~$A$ of the oriented link~$L$ and its transposed matrix~$A^{\mathrm{T}}$.

The general principle regarding these matrices is the following: what Seifert matrices can do in one variable for oriented links,
generalized Seifert surfaces can do in~$\mu$-variables for~$\mu$-colored links. In Sections~\ref{sub:sign} and~\ref{sub:Conway} we illustrate this principle with two
examples of invariants.

	\subsection{Signatures and nullities of colored links}
	\label{sub:sign}
	
Fix a C-complex~$S$ for a~$\mu$-colored link~$L$ and a basis of~$H_1(S)$.
Consider an element~$\omega=(\omega_1,\dots,\omega_\mu)$ of~$\mathbb{T}^\mu_*\coloneqq(S^1\setminus\{1\})^\mu$, and set
\begin{equation}
\label{eq:H}
H(\omega)\coloneqq\sum_{\varepsilon\in\{\pm 1\}^\mu}\bigg(\prod_{i=1}^\mu(1-\overline{\omega}_i^{\varepsilon_i})\bigg)A^\varepsilon\,.
\end{equation}
Using~\eqref{eq:A-sym}, one easily checks that~$H(\omega)$ is a Hermitian matrix and hence admits a well-defined signature~$\sign(H(\omega))\in\mathbb{Z}$ and nullity~$\nul(H(\omega))\in\mathbb{Z}_{\ge 0}$.

\begin{definition}[\cite{CimFlo08}]
\label{def:sign}
The {\em signature\/} and {\em nullity\/} of the~$\mu$-colored link~$L$ are functions
\[
\sigma_L,\eta_L\colon \mathbb{T}^\mu_*\longrightarrow \Z
\]
defined by~$\sigma_L(\omega)\coloneqq\sign(H(\omega))$ and~$\eta_L(\omega)\coloneqq\nul(H(\omega))$, respectively.
\end{definition}
The fact that these functions are well-defined invariants, i.e. do not depend on the choice of the C-complex~$S$ for~$L$, relies on the aforementioned notion of S-equivalence~\cite{CimFlo08,DMO}. Note that in the case~$\mu=1$, the functions~$\sigma_L,\eta_L\colon S^1\setminus\{1\}\to\Z$ are the signature and nullity of the Hermitian matrix~$(1-\omega)A+(1-\overline{\omega})A^{\mathrm{T}}$, i.e. they coincide with the
Levine-Tristram signature and nullity of the oriented link~$L$. We refer to the recent survey~\cite{Con19} for
background on this classical invariant.

In a nutshell, all the remarkable properties of the Levine-Tristram signature extend to the multivariable setting.
For example, the function~$\sigma_L$ is constant on the connected components
of the complement in~$\mathbb{T}^\mu_*$ of the zeros of the multivariable
Alexander polynomial~$\Delta_L(t_1,\dots,t_\mu)$~\cite{CimFlo08} (see Section~\ref{sub:Conway} below). Also, if~$(\omega_1,\dots,\omega_\mu)\in \mathbb{T}^\mu_*$ is not the root of any Laurent polynomial~$p(t_1,\dots,t_\mu)$ with~$p(1,\dots,1)=\pm 1$, then~$\sigma_L(\omega_1,\dots,\omega_\mu)$ and~$\eta_L(\omega_1,\dots,\omega_\mu)$ are invariant under {\em topological concordance\/} of colored links~\cite{CNT}.

	\subsection{The Conway function of a colored link}
	\label{sub:Conway}
	
	The one-variable Alexander polynomial~$\Delta_L(t)$ of an oriented link~$L$ can be generalized to
	a~$\mu$-variable polynomial invariant~$\Delta_L(t_1,\dots,t_\mu)$ of a~$\mu$-colored link~$L$, a fact known to
	Alexander himself~\cite{Alex28}. To do so, consider the exterior~$X_L:=S^3\setminus\nu(L)$ of~$L$ and the surjective group homomorphism
	\[
	\pi_1(X_L)\to\Z^\mu\,,\quad[\gamma]\mapsto(\lk(\gamma,L_1),\dots,\lk(\gamma,L_\mu))\,.
	\]
	This defines a regular~$\Z^\mu$-cover~$\widehat{X}_L$ of~$X_L$ whose homology groups are hence equipped
	with the structure of a module over the group ring~$\Z[\Z^\mu]=\Z[t_1^{\pm 1},\dots,t_\mu^{\pm 1}]$. In particular,
	the module $\mathcal{A}_L:=H_1(\widehat{X}_L)$ is called the (multivariable) {\em Alexander module\/} of~$L$ (see Section~\ref{sub:module}), and a greatest common
	divisor of the elements of its first elementary ideal is the {\em Alexander polynomial\/} of~$L$.
	
	Note however that this Laurent polynomial in~$\mu$-variables
	is only well-defined up to multiplication by units of the ring~$\Z[t_1^{\pm 1},\dots,t_\mu^{\pm 1}]$, i.e. up to a sign and
	powers of the variables. This later indeterminacy can be easily overcome by harnessing the symmetry of~$\Delta$ and requiring it to satisfy~$\Delta_L(t_1^{-1},\dots,t_\mu^{-1})=\pm \Delta_L(t_1,\dots,t_\mu)$, but the sign issue is a non-trivial one.
	
	The solution was suggested by Conway in his landmark paper~\cite{Con70}. He claimed the existence of a well-defined rational function~$\nabla_L$ satisfying
	\begin{equation}
	\label{eq:Con-Alex}
	\nabla_L(t_1,\dots,t_\mu)\stackrel{\boldsymbol{\cdot}}{=}\begin{cases}
	\frac{1}{t_1-t_1^{-1}}\Delta_L(t^2_1)&\text{if~$\mu=1$;}\\
	\Delta(t_1^2,\dots,t_\mu^2)&\text{if~$\mu>1$,}
\end{cases}
	\end{equation}
where~$\stackrel{\boldsymbol{\cdot}}{=}$ stands for the equality up to multiplication by~$\pm t_1^{\nu_1}\cdots t_\mu^{\nu_\mu}$ with~$\nu_1,\dots,\nu_\mu\in\Z$.
The first explicit construction of this {\em Conway function} was given by Hartley~\cite{Har83} using free differential calculus, but we will make use of the following geometric construction~\cite{Cim04}.
Given any \emph{connected} C-complex $S=S_1\cup\dots\cup S_\mu$ for a $L$, consider the matrix
\begin{equation}
\label{eq:A}
A_S\coloneqq\sum_{\varepsilon\in\{\pm 1\}^\mu}\bigg(\prod_{i=1}^\mu\varepsilon_it_i^{\varepsilon_i}\bigg)A^\varepsilon\,.
\end{equation}
Then, the Conway function of~$L$ is given by
\begin{equation}
\label{eq:Conway}
	\nabla_L(t_1,\dots,t_\mu)=\operatorname{sgn}(S)\bigg(\prod_{i=1}^\mu (t_i-t_i^{-1})^{\chi(S\setminus S_i)-1}\bigg)\det(-A_S)\,,	\end{equation}
	where~$\operatorname{sgn}(S)$ denotes the product of the signs of the clasps of~$S$ (recall Figure~\ref{fig:clasp}).
	Note that in the case~$\mu=1$, equations~\eqref{eq:Con-Alex} and~\eqref{eq:Conway} lead to the formula~$\Delta_L(t)=\det(t^{-1/2}A-t^{1/2}A^{\mathrm{T}})$, the classical definition of the Alexander-Conway polynomial
	of the oriented link~$L$~\cite{Kau81}.
	
	This geometric construction of the Conway function yields straightforward proofs of the various properties
	of this invariant. In particular, it yields a ``geometric explanation'' of the local relations that can be used to compute it from a link diagram, see~\cite{Cim04} for more details.

	\section{Proofs of the main results}
	\label{sec:proof}

As evident from its title, this section contains the proofs of our results.
More precisely, we start in Section~\ref{sub:proof-sign} with the demonstration of the first part of Theorem~\ref{thm}
on signatures and nullities. Section~\ref{sub:proof-Conway} deals with the second part on the Conway function, while Section~\ref{sub:Kauffman} contains the proof of Corollary~\ref{cor} on the
multivariable Kauffman model. Finally, Section~\ref{sub:module} consists in a slightly informal discussion
on the Alexander module.
	
\subsection{Signatures and nullities}
\label{sub:proof-sign}

In this section we discuss how to compute the multivariable signature, proving part (i) of Theorem~\ref{thm} which we now restate for convenience. 
\begin{prop}
\label{prop:sig}
Let~$D$ be a diagram for a $\mu$-colored link~$L$. 
For any~$\omega=(\omega_1,\dots,\omega_\mu)\in(S^1\setminus\{1\})^\mu$,  the signature and nullity of~$L$ are given by
\[
\sigma_L(\omega)=\textstyle{\frac{1}{2}}(\sign(\widetilde\tau_D(\omega))-w_{\mathrm{m}}(D))\quad\text{and}\quad\eta_L(\omega)=\textstyle{\frac{1}{2}}\nul(\widetilde\tau_D(\omega))\,,
\]
where~$w_{\mathrm{m}}(D)$ is the sum of the signs of all monochromatic crossings of~$D$, and~$\tau_D(\omega)$ stands for the evaluation of~$\tau_D(x)$ at~$x_j=\Re(\omega_j^{1/2})$ and~$x_{jk}=\Re(\omega_j^{1/2}\omega_k^{1/2})$.
\end{prop}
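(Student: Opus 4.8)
The plan is to relate the combinatorially-defined matrix $\widetilde\tau_D(\omega)$ to the Hermitian matrix $H(\omega)$ of \eqref{eq:H} built from a C-complex, since the signature and nullity of $H(\omega)$ are \emph{by definition} $2\sigma_L(\omega)$ and $2\eta_L(\omega)$ (the factor of $2$ being the source of the $\frac{1}{2}$ in the statement; note the Seifert pairing computes each invariant with a doubling coming from the Hermitian symmetrization). The strategy, following the approach of \cite{Liu23} which we aim to extend to the colored setting, is \emph{not} to compare the two matrices directly --- they have different sizes and are built from unrelated data --- but rather to interpolate between them through a sequence of congruences and elementary block operations that preserve signature and nullity. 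First I would extract from the diagram $D$ a specific C-complex: the most natural choice is to take the C-complex obtained by applying Seifert's algorithm color by color, so that the regions of $D$ and the crossings of $D$ correspond to geometric data (Seifert circles/surface pieces and clasps) on $S$. This gives an explicit generalized Seifert matrix $A^\varepsilon$ whose entries are read off from the same local crossing data that defines $\tau_v(x)$.

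The key steps, in order, are as follows. \textbf{(1)} Fix the C-complex $S$ coming from $D$ and write down $H(\omega)$ explicitly in terms of the linking numbers of the chosen basis of $H_1(S)$; organize these so that the block structure of $H(\omega)$ is indexed by the crossings and Seifert pieces of $D$. \textbf{(2)} Perform a congruence $H(\omega)\mapsto P^*H(\omega)P$ (for an explicit invertible $P$ over $\R$, or over the relevant field) that block-diagonalizes or otherwise normalizes the contribution of each clasp; the point of the scaling factors $\tfrac{1}{\sqrt{1-x_j^2}\sqrt{1-x_k^2}}$ in Definition~\ref{def:tau} is precisely to absorb the weights $\prod_i(1-\overline\omega_i^{\varepsilon_i})$ appearing in \eqref{eq:H}, using the identity $\sqrt{1-x_j^2}=\Im(\omega_j^{1/2})$ recorded in Remark~\ref{rem:1}. \textbf{(3)} Identify the congruence-transformed matrix with a matrix built region-by-region and crossing-by-crossing, matching it against $\tau_D(\omega)$; here the local model of Figure~\ref{fig:minor} must be verified at a single crossing, i.e. one checks that the $4\times 4$ local block $\tau_v(x)$ (summed with appropriate identifications when regions coincide) is congruent to the local Seifert contribution at the corresponding clasp. \textbf{(4)} Account for the passage from $\tau_D(\omega)$ to the reduced matrix $\widetilde\tau_D(\omega)$ by removing the two rows/columns at the marked point, showing this corresponds exactly to quotienting out a redundant relation among the regions (the sum of all region-columns vanishes, reflecting that regions are bounded faces of a planar diagram), together with normalizing one free variable --- this is where the correction term $w_{\mathrm m}(D)$ enters, as the monochromatic crossings contribute the extra single-variable summand visible in Example~\ref{ex:1} whose signature must be subtracted off.

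\textbf{Main obstacle.} The hardest part is Step~(2)--(3): establishing the precise congruence that turns the C-complex Hermitian form $H(\omega)$ into the region-indexed matrix $\tau_D(\omega)$. The difficulty is twofold. First, the two matrices are indexed by genuinely different sets --- $H_1(S)$ versus the regions of $D$ --- so one must produce an explicit change of basis (not merely an abstract isomorphism) and track how signature and nullity transform under it, being careful that the intertwining matrix $P$ is invertible over $\R$ at \emph{every} $\omega\in\mathbb{T}_*^\mu$ and not just generically. Second, the colored/multivariable setting forces one to handle bichromatic crossings, where the two strands carry different colors $j\neq k$ and the weight is the genuinely two-variable factor $\tfrac{1}{\sqrt{1-x_j^2}\sqrt{1-x_k^2}}$; the single-variable argument of \cite{Liu23} must be upgraded so that the local congruence at each clasp respects the color bookkeeping, and one must confirm that the bichromatic local block in Figure~\ref{fig:minor} (with its entries $x_j,x_k,x_{jk}$) is the correct congruence image of the clasp's Seifert contribution. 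Once the single-crossing local model is verified and shown to assemble globally --- the assembly being essentially additive because distinct crossings contribute to overlapping but controlled sets of regions --- the signature count follows by additivity of the correction term over monochromatic crossings, and the nullity statement is immediate from the same congruence.
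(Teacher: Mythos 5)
Your proposal has a genuine gap at its very first step: you assert that $\sign(H(\omega))$ and $\nul(H(\omega))$ are ``by definition'' $2\sigma_L(\omega)$ and $2\eta_L(\omega)$, the doubling allegedly coming from the Hermitian symmetrization. This is false: by Definition~\ref{def:sign}, $\sigma_L(\omega)=\sign(H(\omega))$ and $\eta_L(\omega)=\nul(H(\omega))$ exactly, with no factor of $2$. This error is fatal to the whole strategy, because congruences and the splitting-off of blocks change signature and nullity only additively (by the contribution of the split-off block); they can never introduce a multiplicative factor of $2$ in the nullity. So if your proposed chain of signature- and nullity-preserving operations relating $H(\omega)$ (for $L$ itself) to $\widetilde\tau_D(\omega)$ existed, it would prove $\eta_L(\omega)=\nul(\widetilde\tau_D(\omega))$ and $\sigma_L(\omega)=\sign(\widetilde\tau_D(\omega))-w_{\mathrm{m}}(D)$, which contradicts the statement you are trying to prove and is genuinely false: in Example~\ref{ex:2} one has $\nul(\widetilde\tau_D(\omega))=2$ at points with $\omega_1\omega_2=-1$, while $\eta_L(\omega)=1$ there. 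Hence no such chain of operations can exist.

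The missing idea --- the crux of the paper's proof, extending \cite{Liu23} --- is a doubling trick at the level of links rather than of forms: one works with $L\#_1 rL$, a connected sum of $L$ with its orientation-reverse along components of color $1$, for which $\sigma_{L\#_1 rL}(\omega)=2\sigma_L(\omega)$ and $\eta_{L\#_1 rL}(\omega)=2\eta_L(\omega)$ by additivity under connected sum and invariance under orientation reversal. The diagram $D$ is then used to build a C-complex $S$ for this \emph{doubled} link (drawing a reversed copy of $D$ slightly above and behind it, creating a clasp at each bichromatic crossing, applying Seifert's algorithm at each monochromatic crossing, and cutting at the marked point), and it is precisely this doubled surface whose $H_1$ has a basis indexed by the regions of $D$ (minus two) together with the monochromatic crossings. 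Your ``natural'' choice --- Seifert's algorithm applied color by color to $D$, producing a C-complex for $L$ itself --- does not have this property: its first Betti number is governed by the number of Seifert circles, not by the regions of $D$, so its $H(\omega)$ does not even have a size compatible with $\widetilde\tau_D(\omega)$. Once the doubled C-complex is in place, the remaining steps do run roughly as you outline (a block decomposition in which the monochromatic-crossing block $Z$ is diagonal with signature $-w_{\mathrm{m}}(D)$ and zero nullity, a Schur-complement congruence, a local verification at each crossing against Figure~\ref{fig:minor}, and a final rescaling of the region basis by unit complex numbers determined by winding numbers), but without the passage to $L\#_1 rL$ the strategy collapses at the outset.
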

\begin{proof} Fix an arbitrary~$\mu$-colored link~$L$, and let~$rL$ denote the~$\mu$-colored  link~$L$ with reverse orientation but same coloring as~$L$. Let~$L \#_{1} rL$ denote a connected sum of~$L$ and~$rL$ along two components of color~1.
Unlike for knots, the isotopy type of the connected sum of (colored) links is not well-defined.
However, any two such connected sums have the same signature and nullity, as these invariants behave additively
under this ill-defined operation: this follows from Propositions~2.12 and 2.5 of~\cite{CimFlo08}. Since the signature and nullity
are unchanged by reversing the orientation (see~\cite[Corollary~2.9]{CimFlo08}), the relations 
\begin{equation}\label{eq:double}\sigma_{L \#_{1} rL}(\omega) = 2\sigma_L(\omega) \hspace*{1cm} \text{and} \hspace*{1cm} \eta_{L \#_{1} rL}(\omega) = 2\eta_L(
\omega)\end{equation}
hold for any such connected sum.

The idea of the proof is to use a diagram~$D$ for~$L$ to construct a C-complex~$S$ for~$L\#_1 rL$ whose first homology has a basis given by classes of loops corresponding to the regions and the monochromatric crossings of~$D$ -- minus the two regions near the connected sum. By taking generalized Seifert matrices with respect to this basis, we show that the matrix~$H(\omega)$ used in Definition~\ref{def:sign} is congruent to a block-diagonal matrix of the form~$\widetilde{\tau}_D(\omega)\oplus Z$ with~$\sigma(Z)=-w_{\mathrm{m}}(D)$ and~$\nul(Z)=0$. Combined with Equation~\eqref{eq:double}, this completes the proof.

We now give the details. To construct a C-complex for~$L\#_1 rL$ from~$D$, we use the following procedure (see Figure~\ref{fig:Ccrossingconstruction} for the construction near crossings, and Figure~\ref{fig:exampleS} for an example).
\begin{enumerate}
\item At each crossing of~$D$, draw a copy of the corresponding crossing for~$rL$ ``a bit above and behind'' the crossing of~$D$.

\begin{figure}[tbp]
    \centering
   \includegraphics[width=\textwidth]{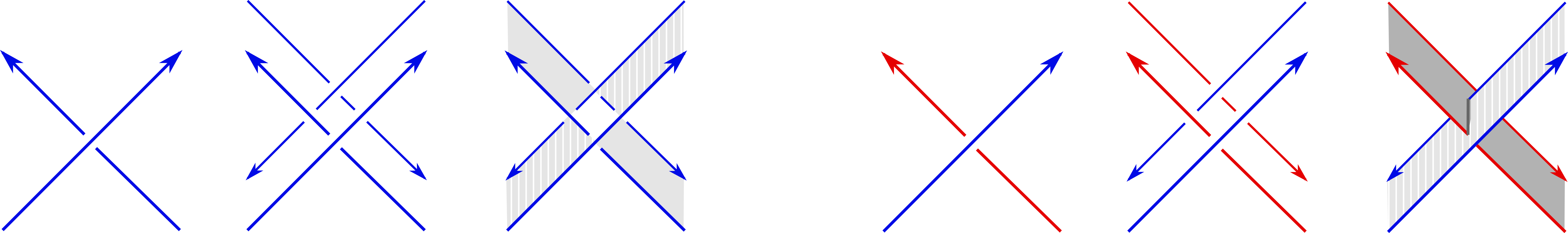}
    \caption{The construction of a C-complex~$S$ for~$L\#_1rL$ near a monochromatic crossing (left) and a bichromatic crossing (right).}
    \label{fig:Ccrossingconstruction}
\end{figure}

\begin{figure}[tbp]
    \centering
   \includegraphics[width=12cm]{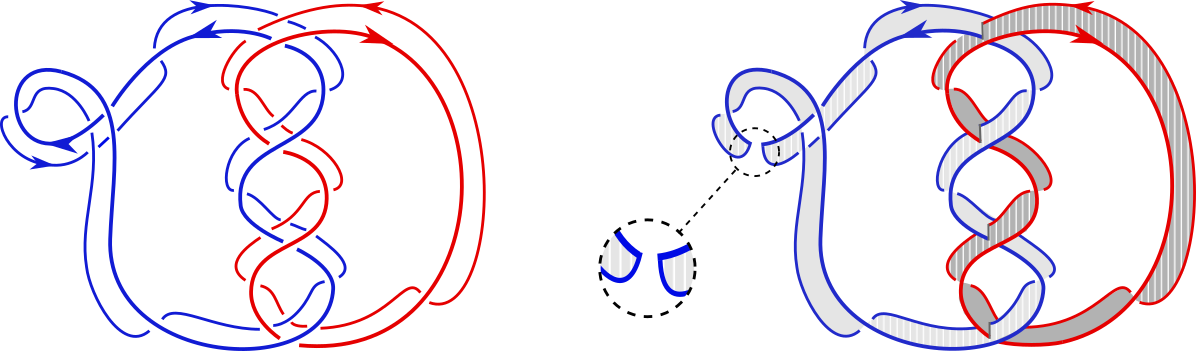}
    \caption{Left: the diagram for the disjoint sum of~$L$ and~$rL$ obtained from~$D$ of Figure~\ref{fig:example}.
    Right: the corresponding C-complex~$S$ for~$L\#_1rL$.}
    \label{fig:exampleS}
\end{figure}

\item Connect the remaining strands of~$rL$ to each other following along the edges of~$D$, possibly creating an additional crossing along each edge (with~$rL$ passing under~$L$). This yields a diagram for the disjoint sum of~$L$ and~$rL$.

\item Create a clasp intersection near each bichromatic crossing of $D$ and apply the usual Seifert algorithm near each monochromatic crossing of $D$.
This yields a C-complex for the disjoint sum of~$L$ and~$rL$.

\item Finally, pick a point on a strand of color~1 in~$D$ and cut the corresponding surface at that place.
The result is a C-complex~$S$ for (some version of)~$L\#_1rL$.

\end{enumerate}

Note that~$S$ deformation retracts onto a graph defined as follows:
take the~4-regular graph underlying the diagram~$D$, add a loop at each vertex corresponding to a monochromatic vertex, and remove the edge along which the connected sum was performed.
As a consequence, a natural basis for~$H_1(S)$ is given by classes of cycles corresponding to the regions and monochromatic crossings of $D$ -- minus the two regions adjacent to where the connected sum happens.
We use the convention that the cycles representing our basis of~$H_1(S)$ are oriented counterclockwise
in the plane of~$D$ where~$S$ is drawn, and denote by the same letter a region or monochromatic crossing and its corresponding cycle of~$H_1(S)$.
Using this explicit basis, we now study the local contribution to generalized Seifert matrices near crossings of~$D$. 

For the remainder of the proof, we adopt the following convention: we say a crossing~$v$ has color~$(j,k)=(j_v, k_v)$
if its incoming left strand is color~$j$, and incoming right strand has color~$k$,
as in Figure~\ref{fig:minor}. If~$j=k$, we may simply say it has color~$j$.

\begin{figure}[tbp]
    \centering
    \begin{picture}(400,100)
    
   \put(0,0){\includegraphics[width=14cm]{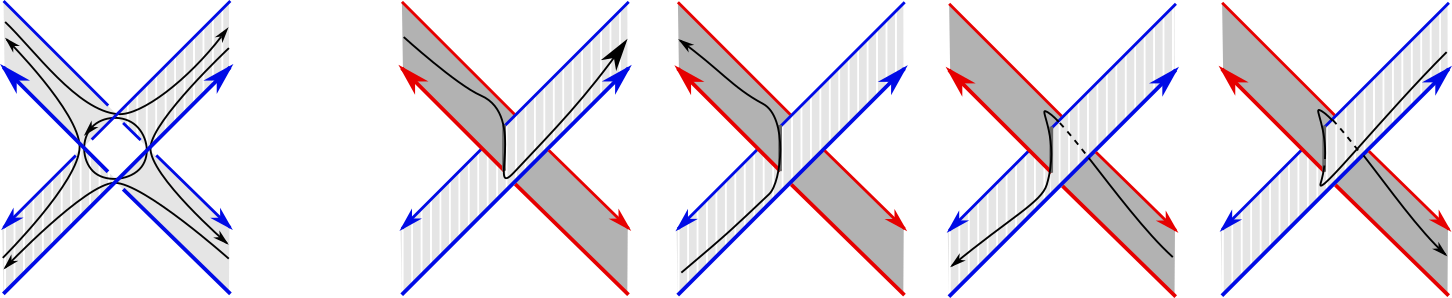}}
    \put(30,70){$a$}
    \put(0,40){$b$}
    \put(30,10){$c$}
    \put(60,40){$d$}
    \put(29,39){$v$}
    \put(138,70){$a$}
    \put(190,40){$b$}
    \put(290,10){$c$}
    \put(390,40){$d$}
    \end{picture}
    \caption{The five cycles of~$S$ near a monochromatic crossing (left, one image) and the four cycles of~$S$ near a bichromatic crossing (right, four images). The labels $a, b, c, d, v$ for the regions are used for the local linking matrices in Figure~\ref{fig:locallinking}.}
    \label{fig:H1classes}
\end{figure}

As illustrated in Figure~\ref{fig:H1classes}, there are five homology classes in~$H_1(S)$ coming into play near a monochromatic crossing of~$D$, and four near a bichromatic crossing. To compute the Seifert forms locally, we need to choose a convention for drawing the pushouts so that no contribution to the linking numbers comes from the crossings that occur along the edges of~$D$: this is illustrated in Figure~\ref{fig:edgenolinking}.

\begin{figure}
	\includegraphics[trim = 0 30 0 0, clip, width=1.2cm]{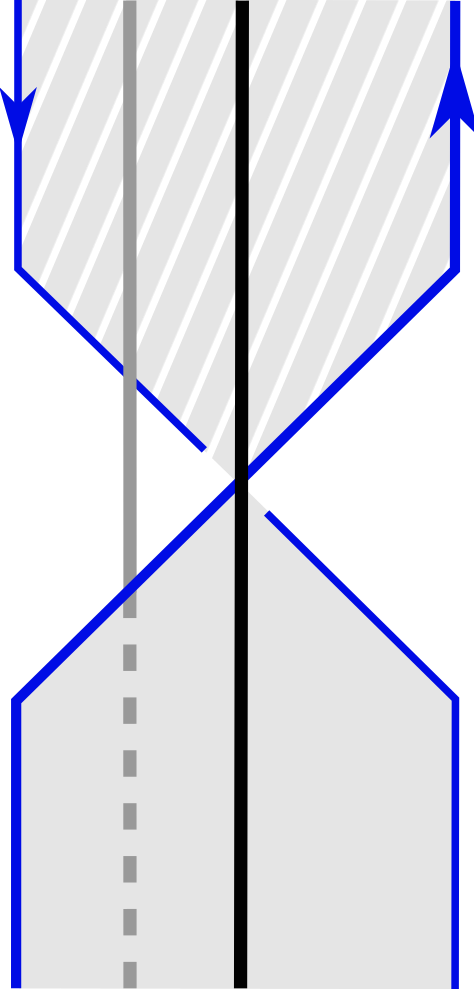}
	\hspace*{1cm}
	\includegraphics[trim = 0 30 0 0, clip, width=1.2cm]{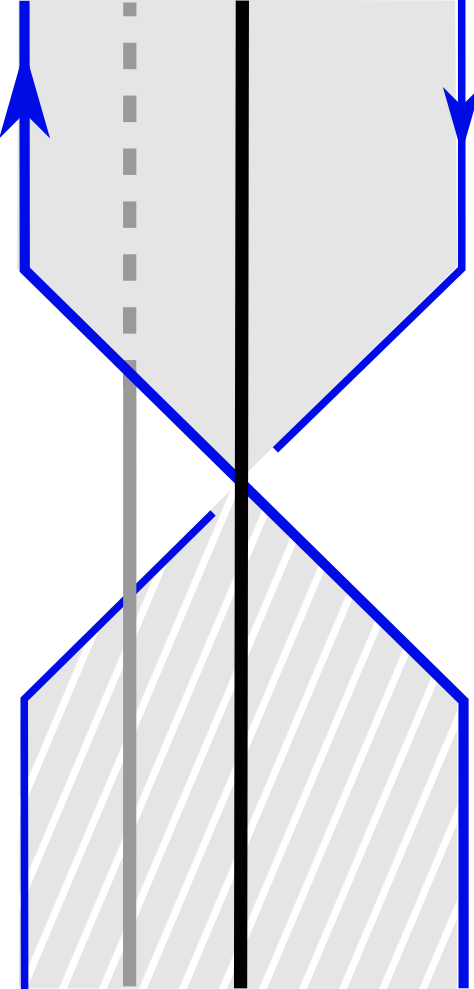}
	\hspace*{3cm}
	\includegraphics[trim = 0 30 0 0, clip, width=1.2cm]{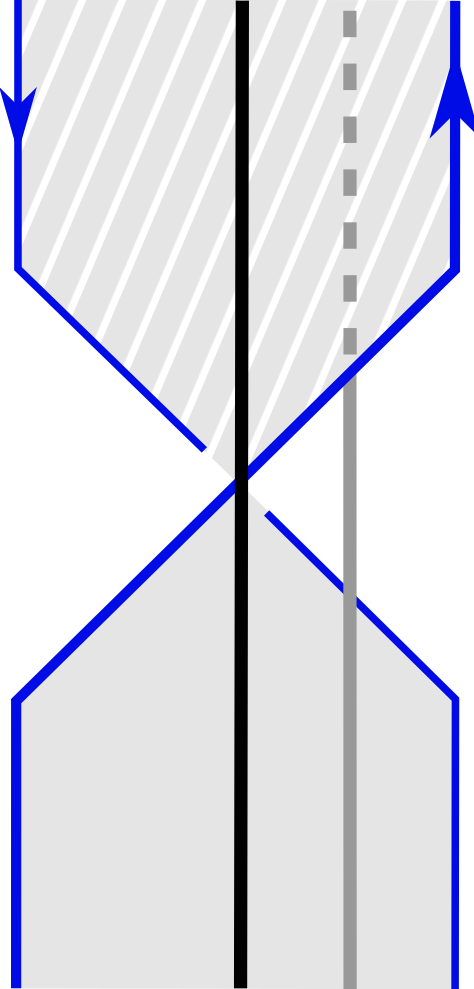}
	\hspace*{1cm}
	\includegraphics[trim = 0 30 0 0, clip, width=1.2cm]{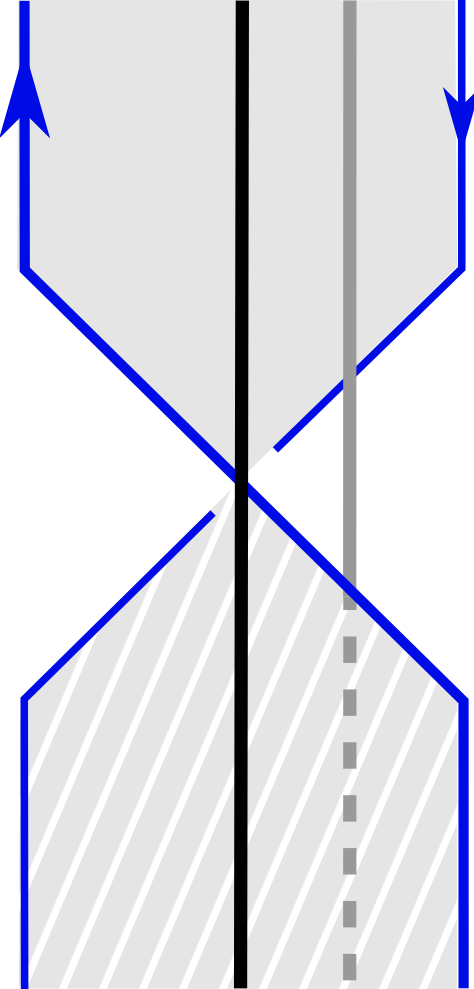}
	\caption{Convention for drawing the pushouts in the positive (left) and negative (right) directions near a crossing of~$L \#_1 rL$ occurring along an edge of~$D$: if the pushout appears behind the surface (dotted grey) then it is drawn between the original curve (black) and the diagram for~$L$ (thick blue); if the pushout appears in front of the surface (solid grey) then it is drawn between the curve and the diagram for~$rL$ (thin blue).}
	\label{fig:edgenolinking}
\end{figure}

 The local contribution to the matrices~$A^\varepsilon$ near different types of crossings of~$D$ are given in Figure~\ref{fig:locallinking}. Since the C-complex near a negative bichromatic crossing is the mirror image of the C-complex near a positive bichromatic crossing, the contribution near a negative bichromatic crossing can be obtained by changing the sign of the contribution near a positive crossing. Moreover, by the symmetry~\eqref{eq:A-sym}, the contribution for opposite choices of~$\varepsilon$ can be obtained by transposition. In conclusion, the local linking of all possible cases can be computed from those in Figure~\ref{fig:locallinking}. For the sake of clarity, let us mention that in the case of monochromatic crossings as well, one could compute the contribution for negative crossings from the one for positive crossings: the linking numbers involving the curves~$b$ and~$d$ (or their pushout) is unchanged, while the others change in a controlled way. However, since the precise relation is less immediately evident, we prefer to include both matrices in Figure~\ref{fig:locallinking}.

\begin{figure}[htbp]
    \centering
    
\begin{subfigure}{1 \textwidth}
    $$\begin{array}{c||cccc|c}
\lk & a & b & c & d & v\\
\hline
\hline
&&&&&\\[-0.3cm]
a^{(1)} & -1/2 & -1/2 & 0 & 0 & 1\\
b^{(1)} & 0 & 0 & 0 & 0 & 0\\
c^{(1)} & 0 & -1/2 & -1/2 & 0 & 1\\
d^{(1)} & 1/2 & 0 & 1/2 & 0 & -1\\
\hline
&&&&&\\[-0.3cm]
v^{(1)} &0 & 1 & 0 & 0 & -1
\end{array} 
\hspace*{1cm}\begin{array}{c||cccc|c}
\lk & a & b & c & d & v\\
\hline
\hline
&&&&&\\[-0.3cm]
a^{(1)} & 1/2 & -1/2 & 0 & 0 & 0\\
b^{(1)} & 0 & 0 & 0 & 0 & 0\\
c^{(1)} & 0 & -1/2 & 1/2 & 0 & 0\\
d^{(1)} & 1/2 & 0 & 1/2 & 0 & -1\\
\hline
&&&&&\\[-0.3cm]
v^{(1)} &-1 & 1 & -1 & 0 & 1
\end{array}$$
\caption{Contributions near a positive (left) and negative (right) monochromatic crossing of color~$j$,
where~$x^{(1)}$ stands for~$x^\varepsilon$ with~$\varepsilon_j =1$.}
\end{subfigure}

\begin{subfigure}{1 \textwidth}

$$\begin{array}{c||cccc}
\lk & a & b & c & d \\
\hline
\hline
&&&&\\[-0.3cm]
a^{(1,1)} & -1/2 & -1/2 & 0 & 0\\
b^{(1,1)} & 0 & 0 & 0 & 0 \\
c^{(1,1)} & 0 & 1/2 & -1/2 & 0 \\
d^{(1,1)} & 1/2 & -1 & 1/2 & 0 \\
\end{array}
\hspace*{1cm}
\begin{array}{c||cccc}
\lk & a & b & c & d \\
\hline
\hline
&&&&\\[-0.3cm]
a^{(1,-1)} & 0 & 0 & 0 & 0\\
b^{(1,-1)} & -1/2 & 1/2 & 0 & 0 \\
c^{(1,-1)} & 1 & -1/2 & 0 & -1/2 \\
d^{(1,-1)} & -1/2 & 0 & 0 & 1/2 \\
\end{array}$$
\caption{Contributions near a positive bichromatic crossing of colors~$(j,k)$,
with~$x^\varepsilon$ denoted by~$x^{(\varepsilon_j,\varepsilon_k)}$.}
\end{subfigure}
    \caption{The local contributions to linking numbers near crossings of~$D$, where the curves are labeled as in Figure~\ref{fig:H1classes}.}
    \label{fig:locallinking}
\end{figure}

We now write~$H(\om)$ as a sum over crossings of~$D$. For a crossing~$v$, let~$A_v^{\varepsilon}$ denote the square matrix (of size equal to the first Betti number of~$S$) given by the contribution to~$A^\varepsilon$ from the linking near~$v$; in other words,~$A^\varepsilon_v$ is zero everywhere except in the~$5 \times 5$ or~$4 \times 4$ minor corresponding to the homology classes coming into play near~$v$, where its values are given by the local contributions to linking numbers given by the matrices from Figure~\ref{fig:locallinking}. We have:
\begin{equation}
H(\omega) = \sum_{\varepsilon \in \{\pm 1\}^\mu} \bigg(\prod_{i=1}^\mu (1-\bom_i^{\varepsilon_i})\bigg)A^\varepsilon
= \sum_{v}\sum_{\varepsilon \in \{\pm 1\}^\mu} \bigg(\prod_{i=1}^\mu (1-\bom_i^{\varepsilon_i})\bigg) A_{v}^\varepsilon=:\sum_v H_v\,,
\label{eq:HoverXings}
\end{equation}
where~$H_v=\sum_{\varepsilon} \Big(\prod_{i}(1-\bom_i^{\varepsilon_i})\Big) A_{v}^\varepsilon$, and the sums indexed by~$v$ always refer to the sum over all crossing of~$D$.
Note that~$A_v^\varepsilon$ is entirely specified by~$\varepsilon_{j_v}$ and~$\varepsilon_{k_v}$. Thus we use~$A_v^{(\alpha, \beta)}$ to denote~$A_v^\varepsilon$ for any~$\varepsilon$ with~$\varepsilon_{j_v} = \alpha$ and~$\varepsilon_{k_v}=\beta$. If~$v$ is monochromatic with~$\varepsilon_{j_v} = \varepsilon_{k_v}=\alpha$, we simply write~$A_v^{(\alpha)}$. 

When~$v$ is monochromatic of color~$j=j_v$, the matrix~$H_v$ can be rewritten as
\begin{equation*}
\begin{split}
H_v  &= \sum_{(\varepsilon_1, \cdots, \widehat{\varepsilon}_{j}, \cdots, \varepsilon_\mu) \in \{\pm 1\}^{\mu -1}} \bigg(\prod_{\substack{i=1 \\ i\neq j}}^{\mu} (1-\bom_i^{\varepsilon_i})\bigg) \left((1-\bom_{j})A^{(1)}_v + (1-\om_{j})A^{(-1)}_v\right)\\
&=\bigg(\prod_{\substack{i=1 \\ i\neq j}}^{\mu}(1-\bom_i)(1-\om_i)\bigg
)\left((1-\bom_{j})A^{(1)}_v + (1-\om_{j})A^{(-1)}_v\right)\,,
\end{split}
\end{equation*}
where the second equality uses the relation~$(1-\bom_i)+(1-\om_i) =  (1-\bom_i)(1-\om_i)$.
Hence, using the notation~$s_i := (1-\om_i)$, the matrix~$H_v$ for a monochromatic crossing~$v$ of color~$j$ is given by
\begin{equation}
H_v =\bigg(\prod_{\substack{i=1 \\ i\neq j}}^{\mu} |s_i|^2 \bigg) \left(\bar{s}_{j}A^{(1)}_v + s_{j}A^{(-1)}_v\right)=:\bigg(\prod_{\substack{i=1 \\ i\neq j}}^{\mu} |s_i|^2\bigg) \,A_v\,,
\label{eq:Hvmono}
\end{equation}
while for a bichromatic crossing of colors~$(j,k)$, it is given by
\begin{equation}
H_v = \bigg(\prod_{\substack{i=1 \\ i\neq j,k}}^{\mu}  |s_i|^2\bigg)\left(\bar{s}_{j}\bar{s_{k}}A^{(1,1)}_v + \bar{s}_{j}s_{k}A^{(1,-1)}_v + s_{j}\bar{s}_{k}A^{(-1,1)}_v + s_{j}s_{k}A^{(-1,-1)}_v\right)=: \bigg(\prod_{\substack{i=1 \\ i\neq j,k}}^{\mu}  |s_i|^2\bigg) \,A_v\,,
\label{eq:Hvbi}
\end{equation}
where we use the notation
\[
A_v:=\begin{cases}
\bar{s}_{j}A^{(1)}_v + s_{j}A^{(-1)}_v&\text{if~$v$ is monochromatic of color~$j$};\\
\bar{s}_{j}\bar{s_{k}}A^{(1,1)}_v + \bar{s}_{j}s_{k}A^{(1,-1)}_v + s_{j}\bar{s}_{k}A^{(-1,1)}_v + s_{j}s_{k}A^{(-1,-1)}_v
&\text{if~$v$ is bichromatic of colors~$j,k$}\,.
\end{cases}
\]
Plugging Equations~\eqref{eq:Hvmono} and~\eqref{eq:Hvbi} into Equation~\eqref{eq:HoverXings}, we get
\begin{equation}\label{eq:HasA}
    H(\om) =  \sum_{v} H_v= \sum_{v} \bigg(\prod_{i \neq j_v, k_v}^{\mu} |s_i|^2\bigg) A_v\,.
\end{equation} 

Writing the Hermitian matrix~$H(\omega)$ as a block matrix of the form
$$
H(\omega) = \begin{array}{c||c|c}
     &\text{regions}& \text{mono crossings}  \\
     \hline
     \hline
     \text{regions}& X & Y\\
     \hline
     \text{mono crossings} & Y^* & Z\\
\end{array}
$$ 
we see that~$Z$ is a diagonal matrix with coefficient corresponding to the monochromatic crossing~$v$ given by~$-\Big(\prod_{i=1}^{\mu}|s_i|^2\Big)\sgn(v)$. In particular,~$Z$ is invertible (and hence has nullity~$\nul(Z)=0$), while its signature is equal to~$\sigma(Z)=-w_{\mathrm m}(D)$.
Furthermore,~$H(\omega)$ is congruent to the block diagonal matrix
\begin{equation}
\label{eq:MHM}
MH(\omega)M^*  = \begin{pmatrix} X-YZ^{-1}Y^* & 0 \\ 0 & Z^{-1}\end{pmatrix}
\end{equation}
via $M = \begin{pmatrix} I & -YZ^{-1} \\ 0 & Z^{-1}\end{pmatrix}$. Since $\sigma(Z^{-1}) = \sigma(Z) =-w_{\mathrm m}(D)$ and~$\nul(Z^{-1})=0$, it remains to show that the matrix~$X-YZ^{-1}Y^*$ coincides with~$\widetilde{\tau}_D(\om)$ up to transformations that do not affect the signature and nullity.

To determine~$X-YZ^{-1}Y^*$, let us fix two regions~$a$ and~$b$. Note that
$$(YZ^{-1}Y^*)_{a,b} = \sum_{v} Y_{a,v}Z_{v,v}^{-1}\overline{Y_{b,v}}\,,$$
and that it is only possible for both~$Y_{a,v}$ and~$Y_{b,v}$ to be nonzero if the regions~$a$ and~$b$ are both adjacent to the crossing~$v$. Therefore, the matrix~$YZ^{-1}Y^*$ is a sum over crossings, where the contribution at each crossing is a matrix that is zero everywhere except in the~$4 \times 4$ minor corresponding to the four adjacent regions of the crossing. The same is then true for~$X-YZ^{-1}Y^*$.
For a bichromatic crossing~$v$, there is no column of~$Y$ that corresponds to~$v$, so this~$4 \times 4$ minor is nothing but the nonzero~$4 \times 4$ minor of~$A_v$. For a monochromatic crossing~$v$, the~$4 \times 4$ minor of~$X-YZ^{-1}Y^*$ is obtained by performing the corresponding matrix operations to the~$5 \times 5$ minor of~$A_v$. The computation is similar to the single variable case, see~\cite{Liu23} for more details.
The explicit values for these local contributions to~$X-YZ^{-1}Y^*$ are given in Figure~\ref{fig:Av}.
Since the minor for a negative crossing turns out to be the negative of the minor for a positive one, we only provide these minors in the case of positive crossings.

\begin{figure}\centering
\begin{subfigure}{\textwidth}$$\left[\begin{array}{c||cccc}& a & b & c & d \\
\hline
\hline
&&&&\\[-0.3cm]
a& \frac{\om_j+\bom_j}{2}& -\frac{1+\bom_j}{2} & 1 & -\frac{1+\om_j}{2} \\
&&&&\\[-0.3cm]
b&-\frac{1+\om_j}{2} & 1 & -\frac{1+\om_j}{2} & \om_j \\
&&&&\\[-0.3cm]
c& 1 &-\frac{1+\bom_j}{2} &\frac{\om_j+\bom_j}{2} & -\frac{1+\om_j}{2}\\
&&&&\\[-0.3cm]
d&-\frac{1+\bom_j}{2}  & \bom_j& -\frac{1+\bom_j}{2}  & 1\end{array} \right]$$\caption{$v$ positive monochromatic crossing of color~$j$}\label{subfig:monopos}\end{subfigure}
\hfill
\begin{subfigure}{\textwidth}
$$\left[\begin{array}{c||cccc}& a & b & c & d \\
\hline
\hline
&&&&\\[-0.3cm]
a& -\frac{\ov{s_j}\ov{s_k}+s_js_k}{2} &\frac{\ov{s_k}(\om_j-\bom_j)}{2}& s_j\ov{s_k} &\frac{s_j(\bom_k-\om_k)}{2}\\
&&&&\\[-0.3cm]
b & \frac{s_k(\bom_j-\om_j)}{2} & \frac{s_j\ov{s_k}+\ov{s_j}s_k}{2} & \frac{s_j(\bom_k-\om_k)}{2} & -s_js_k \\
&&&&\\[-0.3cm]
c & \ov{s_j}s_k  & \frac{\ov{s_j}(\om_k-\bom_k)}{2} & -\frac{\ov{s_j}\ov{s_k}+s_js_k}{2} & \frac{s_k(\bom_j-\om_j)}{2}\\
&&&&\\[-0.3cm]
d & \frac{\ov{s_j}(\om_k-\bom_k)}{2} & -\ov{s_j}\ov{s_k}&\frac{\ov{s_k}(\om_j-\bom_j)}{2} & \frac{s_j\ov{s_k}+\ov{s_j}s_k}{2} \end{array} \right]$$\caption{$v$ positive bichromatic crossing of colors~$(j,k)$}\label{subfig:bipos}\end{subfigure}
\caption{The local contribution to~$X-YZ^{-1}Y^*$ for a positive monochromatic and bichromatic crossing~$v$. If~$v$ is a negative crossing, the matrix is the negative of the corresponding matrix for a positive crossing.}
\label{fig:Av}
\end{figure}

We need to perform one last change of basis, which we now describe. If the sublink~$L_i$ winds around the region~$a$ a total of~$\alpha_i$ times, then multiply the basis element corresponding to~$a$ with $\prod_{i=1}^\mu(-\bsom_i)^{\alpha_i}$.
This change of basis alters the matrices in Figure~\ref{fig:Av} in the following way. If~$v$ is a monochromatic
crossing of color~$j$, then~$L_j$ winds around~$b$ once more than it does around~$a$ and~$c$, and it winds around~$d$ once fewer. Thus the rows corresponding to~$b$ and~$d$ are multiplied by~$-\bsom_j$ and~$-\som_j$ respectively, and the columns are multiplied by the conjugates~$-\som_j$ and~$-\bsom_j$. If~$v$ is bichromatic crossing of colors~$(j, k)$, then~$L_j$ winds once more around~$a$ and~$b$ than it does around~$c$ and~$d$, and~$L_k$ winds once more around~$b$ and~$c$ than it does around~$a$ and~$d$. Thus we multiply the row for~$a$ by~$-\bsom_j$, the row for~$c$ by~$-\bsom_k$, and the row for~$b$ by~$\bsom_j\bsom_k$, and we multiply the corresponding columns by the conjugates. 

Remarkably, the sum of local contributions to~$X-YZ^{-1}Y^*$ from Figure~\ref{fig:Av} can now be written in terms of the single matrix~$\tau_v(\omega)$, the evaluation of~$\tau_v(x)$ at~$x_j=\Re(\omega_j^{1/2})$ and~$x_{jk}=\Re(\omega_j^{1/2}\omega_k^{1/2})$, in both the monochromatic and bichromatic cases. Indeed, for a monochromatic crossing~$v$, it coincides with~$\sgn(v)\tau_v(\omega)$,
while for a bichromatic crossing, it yields $$4\sgn(v)\sqrt{1-x_{j_v}^2}\sqrt{1-x_{k_v}^2}\tau_v(\omega)\,.$$
The result of the matrix~$X-YZ^{-1}Y^*$ after this change of basis thus gives
\begin{equation}
	\label{eq:finalMatrix}
\begin{split}
 \sum_{v \text{ mono}}  \bigg(\prod_{i \neq j_v} |s_i|^2\bigg) \sgn(v)\tau_v(\omega) + &
\sum_{v \text{ bi}} \bigg(\prod_{i \neq j_v, k_v}|s_i|^2\bigg) 4\sgn(v)\sqrt{1-x_{j_v}^2}\sqrt{1-x_{k_v}^2}\tau_v(\omega)  \\
= & \frac{1}{4}\bigg(\prod_{i=1}^\mu |s_i|^2\bigg)\;\sum_{v} \frac{\sgn(v)}{\sqrt{1-x_{j_v}^2}\sqrt{1-x_{k_v}^2}}\tau_v(\omega)=\frac{1}{4}\bigg(\prod_{i=1}^\mu |s_i|^2\bigg)\;\widetilde{\tau}_D(\om)\,.
\end{split}
\end{equation}
The positive constant~$\frac{1}{4}\prod_{i=1}^\mu |s_i|^2$ affects neither the signature nor the nullity, so we have
\begin{align*}
2\sigma_L(\om) &= \sigma(X-YZ^{-1}Y^*) + \sigma(Z^{-1}) =\sigma(\widetilde{\tau}_D(\om)) - w_{\mathrm{m}}(D)\,,\\
2\eta_L(\om) &= \eta(X-YZ^{-1}Y^*) + \eta(Z^{-1}) = \eta(\widetilde{\tau}_D(\om))\,,
\end{align*}
and the proof is complete.
\end{proof}

\begin{note}
	\label{note:change_variables}
In the manipulations of matrices throughout this proof, we never used the fact that the~$\om_j$ are complex numbers (except, of course, when computing signatures): the only property needed is that $\om_j\bom_j = 1$. Therefore, everything works equally well if we consider the $\om_j^{1/2}$ as \emph{formal variables}, and set $\bom_j^{1/2} := \om_j^{-1/2}$, $x_j=\Re(\om_j^{1/2}) := \frac{\om_j^{1/2}+\om_j^{-1/2}}{2}$, and $\sqrt{1-x_j^2}~=~\Im(\om_j^{1/2})~:=~\frac{\om_j^{1/2}-\om_j^{-1/2}}{2i}$.
\end{note}

\subsection{The Conway function}
\label{sub:proof-Conway}
	
In this section we discuss how to compute the Conway function of a colored link from the matrix~$\widetilde{\tau}_D(x)$ and prove the second point of Theorem~\ref{thm}, which we now restate for convenience.

\begin{prop}\label{prop:conway}
    If~$D$ is a connected~$\mu$-colored diagram for a~$\mu$-colored link~$L$, we have the equality
\[
\nabla^2_L(t_1,\dots,t_\mu)=\frac{1}{(t_1-t_1^{-1})^2}\bigg(\prod_v(-\sgn(v))\frac{t_j-t_j^{-1}}{2}\frac{t_k-t_k^{-1}}{2}\bigg)\cdot\det(\widetilde{\tau}_D(t^2))\,,
\]
where the product is over all crossings of~$D$, the indices~$j,k$ are the (possibly identical) colors of the two strands crossing at~$v$, and~$\tau_D(t^2)$ stands for the evaluation of~$\tau_D(x)$ at
\[
x_j=\frac{t_j+t_j^{-1}}{2}\,,\quad x_{jk}=\frac{t_jt_k+t_j^{-1}t_k^{-1}}{2}\,.
\]
\end{prop}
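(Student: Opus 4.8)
The plan is to run the strategy of Proposition~\ref{prop:sig} again on the \emph{same} C-complex $S$ for $L\#_1 rL$ built from $D$, but to feed it into the Conway-function formula~\eqref{eq:Conway} rather than into a signature computation. The first step is to record how $\nabla$ behaves under the two operations used to build $S$. In the potential-function normalization of~\eqref{eq:Conway}, merging two components of color $1$ produces a factor $t_1-t_1^{-1}$, so $\nabla_{L\#_1 rL}=(t_1-t_1^{-1})\,\nabla_L\,\nabla_{rL}$ (this can be read off~\eqref{eq:Conway} from the boundary-connect-sum block structure of $A_S$ and the change in Euler characteristics), while reversing all orientations gives $\nabla_{rL}=\pm\nabla_L$; hence $\nabla_{L\#_1 rL}=\pm(t_1-t_1^{-1})\,\nabla_L^2$. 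This is the exact analogue of~\eqref{eq:double} and reduces the proposition to evaluating the right-hand side of~\eqref{eq:Conway} for the (connected) C-complex $S$ and matching it, up to the factor $1/(t_1-t_1^{-1})$, with $\det\widetilde\tau_D(t^2)$.

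For the computational core I would write $A_S$ of~\eqref{eq:A} as a sum $A_S=\sum_v (A_S)_v$ of local contributions, using the \emph{same} generalized Seifert data as in Figure~\ref{fig:locallinking}: for a monochromatic crossing of color $j$ the local weight is now $(\prod_{i\neq j}(t_i-t_i^{-1}))(t_jA_v^{(1)}-t_j^{-1}A_v^{(-1)})$, and analogously, with $\prod_{i\neq j,k}(t_i-t_i^{-1})$ and the four signed terms $\pm t_j^{\pm1}t_k^{\pm1}$, for a bichromatic one. Decomposing $A_S$ into the block form of Proposition~\ref{prop:sig} over (regions)$\,\oplus\,$(monochromatic crossings), the lower block $Z$ is again diagonal, now with $Z_{vv}=-\sgn(v)\prod_{i}(t_i-t_i^{-1})$, so $\det Z=\big(\prod_{v\ \mathrm{mono}}(-\sgn v)\big)\big(\prod_i(t_i-t_i^{-1})\big)^{m}$, $m$ the number of monochromatic crossings. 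Taking the Schur complement $X-YZ^{-1}W$ and performing the winding-number change of basis of Proposition~\ref{prop:sig} --- but scaling the class of a region $a$ by $\prod_i(-t_i)^{\alpha_{a,i}}=\prod_i(-\som_i)^{\alpha_{a,i}}$ instead of by its conjugate --- I would prove the Conway analogue of~\eqref{eq:finalMatrix}: the rebased local contributions are again proportional to the single matrix $\tau_v(t^2)$ and assemble into $\tfrac14\big(\prod_i(t_i-t_i^{-1})\big)\,\widetilde\tau_D(t^2)$. The cleanest route avoids recomputing the $4\times4$ and $5\times5$ minors from scratch: the Conway and Hermitian local weights are related by $A_v^{\mathrm{sig}}=\big(\prod_{\ell}(t_\ell-t_\ell^{-1})\big)\,\phi(A_v^{\mathrm{Con}})$, where $\phi$ is the substitution $t_i\mapsto t_i^{-1}$ and $\ell$ runs over the color(s) at $v$ (this is the identity $1-t_j^{\mp2}=\pm t_j^{\mp1}(t_j-t_j^{-1})$ behind Note~\ref{note:change_variables}); since $\phi$ commutes with the Schur complement, scales linearly, and fixes $\tau_v(t^2)$ because the latter depends only on the $\phi$-invariant quantities $x_j,x_{jk}$, the identity~\eqref{eq:finalMatrix} transports to the Conway setting with $|s_i|^2$ replaced by $(t_i-t_i^{-1})$. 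Together with $\det Z$ this gives $\det(-A_S)=\pm\det(D_X)^{-2}\big(\prod_{v\ \mathrm{mono}}(-\sgn v)\big)\big(\tfrac14\big)^{c}\big(\prod_i(t_i-t_i^{-1})\big)^{m+c}\det\widetilde\tau_D(t^2)$, where $D_X$ is the (diagonal, monomial-valued) change of basis on the regions and $c$ is the number of crossings.

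It then remains to assemble the scalars. I would compute the Euler characteristics $\chi(S\setminus S_i)$ directly from the construction of $S$ --- its clasps are exactly the bichromatic crossings, and its Seifert-algorithm pieces sit at the monochromatic crossings --- obtaining $\chi(S\setminus S_i)-1=n_i-m-c-\delta_{i1}$, where $n_i$ counts the strand-incidences of color $i$ among the crossings. Substituting this and the previous displays into~\eqref{eq:Conway} and into $\nabla_{L\#_1 rL}=\pm(t_1-t_1^{-1})\nabla_L^2$, the powers of $(t_i-t_i^{-1})$ collapse to the expected $n_i-2\delta_{i1}$ and the constants to $\prod_v(-\sgn v)\frac{t_j-t_j^{-1}}{2}\frac{t_k-t_k^{-1}}{2}$ together with the prefactor $1/(t_1-t_1^{-1})^2$, as a bookkeeping check (which the running Example~\ref{ex:2} confirms, returning $\nabla_L^2=(t_1t_2+t_1^{-1}t_2^{-1})^2$). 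The stray change-of-basis monomial $\det(D_X)^{\pm2}$ is forced to be $\pm1$: both sides of the claimed identity are invariant under $t_i\mapsto t_i^{-1}$ (the entries of $\widetilde\tau_D(t^2)$ are polynomials in $x_j,x_{jk}$, and $\nabla_L^2$ is $\phi$-symmetric), and a Laurent monomial relating two $\phi$-invariant functions must be trivial; the residual global sign is pinned on one example, consistent with the fact that only $\nabla_L^2$, and not the sign of $\nabla_L$, can be recovered.

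The main obstacle is the matrix identity of the second paragraph: that the Schur-complemented, rebased local blocks of the Conway-weighted $A_S$ reproduce \emph{exactly} the same matrices $\tau_v(t^2)$ arising in the signature computation, with the precise proportionality constants. This is the ``remarkable'' coincidence already met in~\eqref{eq:finalMatrix}, and establishing it cleanly --- whether by the $\phi$-transport above or by a direct minor computation in the spirit of~\cite{Liu23} --- is the crux. The secondary difficulties are the honest computation of $\chi(S\setminus S_i)$ and the control of the overall sign and of the change-of-basis monomial, both of which are handled by the $t_i\mapsto t_i^{-1}$ symmetry.
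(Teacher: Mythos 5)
Your proposal follows essentially the same architecture as the paper's proof: reduce to $L\#_1 rL$, apply \eqref{eq:Conway} to the C-complex $S$ built in the signature proof, reuse the block decomposition, Schur complement and winding-number change of basis of Proposition~\ref{prop:sig} in the formal-variable setting of Note~\ref{note:change_variables}, and then do scalar bookkeeping with Euler characteristics (your formula $\chi(S\setminus S_i)-1=n_i-m-c-\delta_{i1}$ agrees with Lemma~\ref{lem:C-complex}, and your collapse of the powers of $(t_i-t_i^{-1})$ is correct). One technical slip: your transport identity $A_v^{\mathrm{sig}}=\big(\prod_\ell(t_\ell-t_\ell^{-1})\big)\phi(A_v^{\mathrm{Con}})$ is off by a transpose. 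Since the local linking matrices are constant and $(A^\varepsilon)^{\mathrm{T}}=A^{-\varepsilon}$, one has $\phi(A_S)=(-1)^\mu A_S^{\mathrm{T}}$, so applying $\phi$ swaps $A_v^{(1)}$ and $A_v^{(-1)}$; the correct relation is the uniform scalar identity $H(t^{-2})=(-1)^\mu\prod_i(t_i-t_i^{-1})\,A_S$ (the paper's \eqref{eq:H-Conway}), with no substitution at all. This is harmless for determinants, but it is also the cleaner route: it lets you reuse \eqref{eq:MHM} and \eqref{eq:finalMatrix} verbatim instead of transporting them crossing by crossing.

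The genuine gap is the sign. The proposition asserts an exact equality, and your derivation accumulates unresolved signs: $\nabla_{rL}=\pm\nabla_L$, $\det(-A_S)=(-1)^{b_1(S)}\det(A_S)$, and one factor $-1$ per color per crossing coming from $(1-t_i^{2})(1-t_i^{-2})=-(t_i-t_i^{-1})^2$. Each of these is a priori diagram-dependent (through $b_1(S)$, $n_{\mathrm m}$, $\mu$, the number of components, \dots). Your $\phi$-invariance argument kills only the monomial part of the discrepancy, never its sign (a $\phi$-invariant monomial is $\pm1$, not $1$), and ``pinning the residual sign on one example'' is circular: a single example determines the sign only once you know the sign is the same for every diagram, and proving that is precisely the bookkeeping you skipped. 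The paper resolves this via Lemma~\ref{lem:C-complex} -- $\sgn(S)$ equals the product of signs of bichromatic crossings, $b_1(S)=n_{\mathrm b}+2n_{\mathrm m}$ is \emph{even} because $n_{\mathrm b}$ is even, and $\chi(S\setminus S_i)$ is computed explicitly -- together with the exact reversal and connected-sum formulas of \cite{Cim04} (which give $\nabla_L\nabla_{rL}=\frac{1}{t_1-t_1^{-1}}\nabla_{L\#_1rL}$ with no sign ambiguity). With these parities in hand every sign evaluates, and they combine to $+1$ using $(-1)^{n_{\mathrm m}}=(-1)^n$. So the repair is localized but essential: replace the example-check by this parity argument and the precise citations.
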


%



Our starting point is Equation~\eqref{eq:Conway}, which expresses the Conway function~$\nabla_L$
in terms of the matrix~$A_S$ associated to a connected C-complex $S$ for~$L$, recall~\eqref{eq:A}.
First note that if~$H(\omega)$ denotes the matrix defined in~\eqref{eq:H}, which is used for computing the multivariable signature, and we consider $\om_i^{-1/2} =: t_i$ as a \emph{formal variable}, we have


\begin{equation}\label{eq:H-Conway}
    \begin{split}
        H(t^{-2})&:=H(t_1^{-2},\cdots,t_{\mu}^{-2}) =  \sum_{\varepsilon\in\{\pm 1\}^\mu}\bigg(\prod_{i=1}^\mu(1-t_i^{2\varepsilon_i})\bigg)A^\varepsilon = \sum_{\varepsilon\in\{\pm 1\}^\mu}\bigg(\prod_{i=1}^\mu\varepsilon_it_i^{\varepsilon_i}\varepsilon_i(t_i^{-\varepsilon_i}-t_i^{\varepsilon_i})\bigg)A^\varepsilon \\
        & = \bigg(\prod_{i=1}^\mu -(t_i - t_i^{-1})\bigg)\sum_{\varepsilon\in\{\pm 1\}^\mu}\bigg(\prod_{i=1}^\mu\varepsilon_it_i^{\varepsilon_i}\bigg) A^\varepsilon  = (-1)^{\mu} \bigg(\prod_{i=1}^\mu (t_i - t_i^{-1})\bigg) A_S \,.
    \end{split}
\end{equation}
Hence, the Conway function can in fact be computed from the matrix~$H$.

Now, in order to prove Proposition~\ref{prop:conway}, we adopt the same strategy as in the computation of the signature: starting from a \emph{connected} diagram~$D$, we use the C-complex~$S$ for~$L\#_1rL$ constructed in the previous section to compute the Conway function of~$L\#_1rL$, and conclude by applying well-known formulas relating the Conway function of a connected sum to the Conway functions of the summands. Note that, by construction, requiring~$D$ to be connected precisely means that~$S$ is connected.

So, let~$D = D_1\cup\dots\cup D_{\mu}$ be a connected,~$\mu$-colored diagram of a~$\mu$-colored link $L$ and let $S = S_1 \cup \dots\cup S_{\mu}$ be the C-complex for~$L\#_1rL$ defined in the proof of Proposition~\ref{prop:sig}. As before, the notation~$\#_1$ stands for the connected sum performed along a component of color~$1$. Let~$n_{\mathrm m}$ and~$n_{\mathrm b}$ denote the number of monochromatic and bichromatic crossings of~$D$, respectively, and~$n = n_{\mathrm m}+n_{\mathrm b}$ the total number of crossings. Similarly, let~$n_{{\mathrm m},i}$ (resp.~$n_{{\mathrm b},i}$) denote the number of monochromatic (resp. bichromatic) crossings of~$D$ \emph{without} the color~$i$. Finally, recall that $\operatorname{sgn}(S)$ denotes the product of the signs of the clasps of~$S$ (see Figure~\ref{fig:clasp}).

\begin{lem}\label{lem:C-complex} With the notations above, the C-complex $S$ satisfies:
    \begin{enumerate}
        \item $\sgn{S} = \prod\limits_{v \text{ bichr.}}\sgn{v}$, where the product is taken over all bichromatic crossings.

        \item Its first Betti number is equal to~$b_1(S) = 
        n+n_{\mathrm m}$ 
        and is even.

        \item $\chi(S\setminus S_1) = - n_{{\mathrm b},1}-2n_{{\mathrm m},1}$ and~$\chi(S\setminus S_i) = 1 - n_{{\mathrm b},i}-2n_{{\mathrm m},i}$ for all~$i\neq 1$.
    \end{enumerate}
\end{lem}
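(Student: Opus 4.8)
The strategy is to exploit the explicit description of $S$ as a surface that deformation retracts onto a graph, which was established in the proof of Proposition~\ref{prop:sig}. Recall that $S$ is built crossing-by-crossing from the diagram $D$: near each bichromatic crossing one inserts a clasp, while near each monochromatic crossing one applies the Seifert algorithm. This local-to-global structure is exactly what makes all three quantities computable by counting contributions crossing by crossing.

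For part (i), the point is purely bookkeeping about the construction in Step~(3) of the C-complex procedure. Clasps are created \emph{only} near bichromatic crossings of $D$ (monochromatic crossings are resolved by the Seifert algorithm and produce no clasp). I would verify directly from Figure~\ref{fig:Ccrossingconstruction} that the clasp inserted at a bichromatic crossing $v$ has sign $\sgn(v)$; since $\operatorname{sgn}(S)$ is by definition the product of the signs of all clasps, this gives $\operatorname{sgn}(S)=\prod_{v\text{ bichr.}}\sgn(v)$ immediately.

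For part (ii), I would use the explicit graph onto which $S$ retracts, already named in the proof of Proposition~\ref{prop:sig}: take the $4$-regular graph underlying $D$, add a loop at each monochromatic vertex, and delete the edge where the connected sum is performed. The first Betti number of a connected graph is $E-V+1$. The underlying $4$-regular graph of a connected diagram with $n$ crossings has $V=n$ vertices and $E=2n$ edges, contributing $b_1=2n-n+1=n+1$; adding one loop at each of the $n_{\mathrm m}$ monochromatic vertices adds $n_{\mathrm m}$ to $b_1$, and deleting one edge subtracts $1$, giving $b_1(S)=n+n_{\mathrm m}$. Since $S$ is a C-complex for $L\#_1 rL$, which is a boundary of a surface obtained by doubling, parity considerations force $b_1(S)$ to be even; alternatively, $n+n_{\mathrm m}=n_{\mathrm b}+2n_{\mathrm m}$ is manifestly even, and I would present it in this form to make evenness transparent.

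For part (iii), the task is to compute $\chi(S\setminus S_i)$ for each color $i$, where $S\setminus S_i$ is the union of the remaining surfaces. I would again pass to the retract: removing $S_i$ corresponds to deleting the vertices and edges of $D$ that belong to color-$i$ strands. The Euler characteristic is most cleanly computed as $\chi = V-E$ on the retract graph (since $\chi(\text{graph})=V-E$). The crossings surviving in $S\setminus S_i$ are exactly those involving neither strand of color $i$, i.e. the $n_{\mathrm m,i}$ monochromatic and $n_{\mathrm b,i}$ bichromatic crossings \emph{without} color $i$; each monochromatic survivor still carries its Seifert-algorithm loop. The main obstacle, and the step I would spend the most care on, is correctly accounting for the edges and the effect of the color-$1$ cut: the $-1$ discrepancy between $\chi(S\setminus S_1)$ and $\chi(S\setminus S_i)$ for $i\neq1$ should arise precisely because the connected-sum edge that was deleted lay on a color-$1$ strand, so it is already absent from $S\setminus S_1$ but must be subtracted when computing $S\setminus S_i$ for $i\neq 1$. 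I would set up the vertex and edge counts for the graph underlying $S\setminus S_i$, track the contribution of the loops at surviving monochromatic crossings (each loop adds an edge but no vertex, lowering $\chi$ by $1$, which explains the coefficient $-2$ on $n_{\mathrm m,i}$ versus $-1$ on $n_{\mathrm b,i}$), and read off the two formulas. Verifying these counts against the worked example of Figure~\ref{fig:exampleS} provides a useful sanity check.
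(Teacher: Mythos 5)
Your proposal is correct and takes essentially the same approach as the paper: all three parts follow by retracting $S$ (resp.\ $S\setminus S_i$) onto the explicit graph described in the proof of Proposition~\ref{prop:sig} and counting vertices, edges and loops, the paper merely packaging the count in (ii) as $b_1(S)=(r-2)+n_{\mathrm m}$ together with Euler's formula $r-2=n$, which is the same computation as your $E-V+1$. The only points to tighten are that $n_{\mathrm b}+2n_{\mathrm m}$ is even only because $n_{\mathrm b}$ is even (strands of two distinct colors cross an even number of times --- the fact the paper invokes explicitly), so it is not quite ``manifestly'' so, and that deleting the connected-sum edge lowers $b_1$ by exactly $1$ because the $4$-valent graph underlying a connected diagram is bridgeless (all vertex degrees are even).
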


\begin{proof} The first equality is clear by construction, since~$S$ has one clasp for each bichromatic crossing of~$D$, and the sign of the clasp is equal to the sign of the corresponding crossing.
To check the second point  let~$r$ denote the number of regions of~$D$. By construction, we have~$b_1(S)=(r-2)+n_{\mathrm m}$, while an Euler characteristic computations yields the equality~$r-2=n$.
Since~$n_{\mathrm b}$ is always even, it follows that~$b_1(S)= n+n_{\mathrm{m}}=n_{\mathrm b}+2n_{\mathrm m}$ is also even. As for the third point, one just needs to notice that~$S\setminus S_i$ deformation retracts onto a graph~$\Gamma_i$ constructed from (the planar projection of) the diagram~$D\setminus D_i$ by adding one loop to each monochromatic crossing and, if~$i\neq 1$, by removing one edge of color~$1$ (which corresponds to performing the connected sum). The number of vertices of~$\Gamma_i$ minus the number of its edges yields the result.
\end{proof}

To shorten our formulas, we denote $s_{\mathrm b} := \prod\limits_{v \text{ bichr.}}\sgn{v}$ and $s_{\mathrm m} := \prod\limits_{v \text{ mono.}}\sgn{v}$, where the product is taken over all bichromatic (resp. monochromatic) crossings of $D$. We are now ready to prove Proposition~\ref{prop:conway}.

\begin{proof}[Proof of Proposition~\ref{prop:conway}]

In what follows, we always evaluate~$H$ at~$\omega_i = t_i^{-2}$, considered as formal variables, and rely on Note~\ref{note:change_variables} to use the computations from the proof of Proposition~\ref{prop:sig} in this formal setting.
Recall that, by \eqref{eq:MHM} and the ensuing discussion, there exists a matrix $M'$ such that $$M'H(\omega)M'^* = \widetilde\tau'_D(\omega) \oplus Z^{-1}\,,$$ where the matrix
$$Z = \bigg(\prod_{i=1}^{\mu}(1-\omega_i)(1-\omega_i^{-1})\bigg)\operatorname{diag}(-\sgn(v))$$ is indexed by the monochromatic crossings of $D$, and $\det(M')=\det(M'^*)=\det(Z^{-1})$ --- in the notations from the proof of Proposition~\ref{prop:sig}, $\widetilde\tau'_D(\omega)$ is the matrix $X-YZ^{-1}Y^*$ after the final change of basis, as in \eqref{eq:finalMatrix}. Therefore, $$\det(H(\omega)) = \det(Z)\det(\widetilde\tau'_D(\omega))  = (-1)^{n_{\mathrm m}} s_{\mathrm m} \bigg(\prod_{i=1}^{\mu}(1-\omega_i)(1-\omega_i^{-1})\bigg)^{n_{\mathrm m}} \det(\widetilde\tau'_D(\omega))\,.$$
Evaluating at~$\omega_i = t_i^{-2}$, and using the equality~$(1-t_i^2)(1-t_i^{-2}) = -(t_i - t_i^{-1})^2$, we obtain
\begin{equation}\label{eq:H-tau}
    \det(H(t^{-2})) = (-1)^{n_{\mathrm m} + \mu n_{\mathrm m}} s_{\mathrm m} \bigg(\prod_{i=1}^{\mu} (t_i - t_i^{-1})^{2n_{\mathrm m}}\bigg) \det(\widetilde\tau'_D(t^2))\,. \end{equation}
Furthermore, since~$\widetilde\tau'_D$ is a matrix of size~$n$ and by~\eqref{eq:finalMatrix}, $$\widetilde\tau'_D(\omega) = \frac{1}{4}\bigg(\prod_{i=1}^{\mu}(1-\omega_i)(1-\omega_i^{-1})\bigg)\widetilde\tau_D(\omega)\,,$$
we have
\begin{equation}\label{eq:tau}
    \det(\widetilde\tau'_D(t^2)) = (-1)^{\mu n}\bigg(\prod_{i=1}^{\mu} (t_i - t_i^{-1})^{2n}\bigg)\det({\textstyle\frac{1}{4}}\widetilde\tau_D(t^2))\,.
\end{equation}
Putting everything together, and writing~$t$ for~$(t_1,\dots,t_\mu)$, we obtain
    \begin{equation*}
        \begin{split}
            \nabla^2_L(t)  =& \nabla_L(t)\nabla_{rL}(t) = \frac{1}{t_1-t_1^{-1}}\nabla_{L\#_1rL}(t) \\
             \overset{\eqref{eq:Conway}}{=} & \frac{1}{t_1-t_1^{-1}} (-1)^{b_1(S)}\sgn(S)\bigg(\prod_{i=1}^\mu (t_i-t_i^{-1})^{\chi(S\setminus S_i)-1}\bigg)\det(A_S) \\
             \overset{\eqref{eq:H-Conway}}{=} & \frac{s_{\mathrm b}}{t_1-t_1^{-1}}\bigg(\prod_{i=1}^\mu (t_i-t_i^{-1})^{\chi(S\setminus S_i)-1}\bigg)(-1)^{\mu b_1(S)}\bigg(\prod_{i=1}^\mu (t_i-t_i^{-1})^{-b_1(S)}\bigg)\det(H(t^{-2})) \\
             \overset{\eqref{eq:H-tau}}{=} & \frac{s_{\mathrm b}}{t_1-t_1^{-1}}\bigg(\prod_{i=1}^\mu (t_i-t_i^{-1})^{\chi(S\setminus S_i)-1-b_1(S)}\bigg)(-1)^{n_{\mathrm m} + \mu n_{\mathrm m}} s_{\mathrm m} \bigg(\prod_{i=1}^{\mu} (t_i - t_i^{-1})^{2n_{\mathrm m}}\bigg) \det(\widetilde\tau'_D(t^2)) \\
             \overset{\eqref{eq:tau}}{=} & \frac{(-1)^{n_{\mathrm m} + \mu n_{\mathrm m}}s_{\mathrm b}s_{\mathrm m}}{t_1-t_1^{-1}}\bigg(\prod_{i=1}^\mu (t_i-t_i^{-1})^{\chi(S\setminus S_i)-1-b_1(S)+2n_{\mathrm m}}\bigg)(-1)^{\mu n}\bigg(\prod_{i=1}^{\mu} (t_i - t_i^{-1})^{2n}\bigg)\det({\textstyle\frac{1}{4}}\widetilde\tau_D(t^2)) \\
             \overset{}{=} & \frac{(-1)^{n_{\mathrm m}}}{(t_1-t_1^{-1})^2}\frac{s_{\mathrm b}s_{\mathrm m}}{4^n}\bigg(\prod_{i=1}^\mu (t_i-t_i^{-1})^{-n_{{\mathrm b},i}-2n_{{\mathrm m},i}+n+n_{\mathrm m}}\bigg)\det(\widetilde\tau_D(t^2))\,,
        \end{split}
    \end{equation*}
    where in the first line we used Corollary~$2$ and Proposition~$5$ of~\cite{Cim04}. The following equalities derive, as indicated, from Equations~\eqref{eq:Conway},~\eqref{eq:H-Conway},~\eqref{eq:H-tau} and~\eqref{eq:tau}, together with the first point of Lemma~\ref{lem:C-complex} in the third line (with~\eqref{eq:H-Conway}), the second point in the fourth line (with~\eqref{eq:H-tau}), and the second and third points in the last line.

    To conclude, we note that the exponent $-n_{{\mathrm b},i}-2n_{{\mathrm m},i}+n+n_{\mathrm m}$ appearing in the last line is equal to the number of bichromatic crossings involving a strand of color $i$ plus twice the number of monochromatic crossings of color $i$. Therefore,
    $$ \frac{s_{\mathrm b}s_{\mathrm m}}{4^n}\prod_{i=1}^\mu (t_i-t_i^{-1})^{-n_{{\mathrm b},i}-2n_{{\mathrm m},i}+n+n_{\mathrm m}} = \prod_v\sgn(v)\frac{t_j-t_j^{-1}}{2}\frac{t_k-t_k^{-1}}{2}\,,$$
    where the product on the right-hand side is over all crossings of $D$ and the indices~$j,k$ are the two (possibly identical) colors of strands crossing at $v$. The proposition now follows from observing that~$(-1)^{n_{\mathrm{m}}}=(-1)^n$ since~$n=n_{\mathrm m}+n_{\mathrm b}$ and~$n_{\mathrm b}$ is even.
\end{proof}

	\subsection{The multivariate Kauffman model}
	\label{sub:Kauffman}

Having finished the proof of Theorem~\ref{thm}, we now turn our attention to Corollary~\ref{cor}.

Starting from a \emph{connected} diagram~$D$ of a~$\mu$-colored link, let~$K_D$ (or simply~$K$) be the matrix defined by the labels in Figure~\ref{fig:labels}, and~$\widetilde K$ the square matrix obtained from~$K$ by removing two columns corresponding to two adjacent regions of~$D$ separated by a strand of color~$1$.
Corollary~\ref{cor} is a direct consequence of Proposition~\ref{prop:conway} together with the following lemma.

\begin{lem}\label{lem:factorization}
    Let $S = (S_{v,v})$ be the diagonal matrix indexed by the crossings of~$D$ with coefficients $$S_{v,v} = \frac{-4\sgn(v)}{(t_j-t_j^{-1})(t_k-t_k^{-1})}\,,$$ where~$j$ and~$k$ are the colors of the two strands meeting at~$v$. Then, we have
    $$ \tau_D(t^2) = K^{\mathrm{T}} S K\,.$$
\end{lem}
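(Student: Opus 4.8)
The plan is to verify the matrix identity $\tau_D(t^2) = K^{\mathrm{T}} S K$ crossing by crossing, exploiting the fact that both sides are sums of local contributions indexed by the crossings of $D$. On the left, $\tau_D(x) = \sum_v \frac{\sgn(v)}{\sqrt{1-x_j^2}\sqrt{1-x_k^2}}\tau_v(x)$ is a sum over crossings by definition. On the right, since $S$ is diagonal and indexed by crossings, $K^{\mathrm{T}} S K = \sum_v S_{v,v}\, (K_v)^{\mathrm{T}} K_v$, where $K_v$ denotes the single row of $K$ corresponding to the crossing $v$ (a row vector indexed by regions, supported on the four regions $a,b,c,d$ adjacent to $v$ with the labels of Figure~\ref{fig:labels}). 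Thus it suffices to check, for each crossing $v$ separately, the equality of $4\times 4$ minors
\[
\frac{\sgn(v)}{\sqrt{1-x_j^2}\sqrt{1-x_k^2}}\,\tau_v(t^2) = S_{v,v}\,(K_v)^{\mathrm{T}} K_v\,,
\]
with the understanding that when the regions $a,b,c,d$ are not distinct one adds the corresponding rows and columns on both sides (the rank-one outer product $(K_v)^{\mathrm{T}} K_v$ being compatible with this summation).

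First I would write out the rank-one matrix $(K_v)^{\mathrm{T}} K_v$ explicitly. Reading the corner labels from Figure~\ref{fig:labels} with $s=\sgn(v)$, the row $K_v$ has entries $(t_j^{1/2}t_k^{1/2})^s,\ (t_j^{1/2}t_k^{-1/2})^s,\ (t_j^{-1/2}t_k^{-1/2})^s,\ (t_j^{-1/2}t_k^{1/2})^s$ in the positions $a,b,c,d$ respectively. Its self outer product is therefore the $4\times 4$ matrix whose $(p,q)$ entry is the product of the $p$-th and $q$-th labels; for instance the $(a,c)$ entry is $(t_j^{1/2}t_k^{1/2})^s(t_j^{-1/2}t_k^{-1/2})^s = 1$, the $(b,d)$ entry is likewise $1$, the diagonal $(a,a)$ entry is $(t_jt_k)^s$, and so on. Multiplying through by $S_{v,v} = -4\sgn(v)/\big((t_j-t_j^{-1})(t_k-t_k^{-1})\big)$ should reproduce, entry for entry, the prescribed evaluation of $\tau_v$.

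Next I would reconcile this with the left-hand side. Under the substitution $x_j = (t_j+t_j^{-1})/2$, $x_k=(t_k+t_k^{-1})/2$, $x_{jk} = (t_jt_k+t_j^{-1}t_k^{-1})/2$, together with the sign convention $\sqrt{1-x_j^2} = (t_j^{-1}-t_j)/(2i)$ of Remark~\ref{rem:1}(ii), the prefactor becomes
\[
\frac{\sgn(v)}{\sqrt{1-x_j^2}\sqrt{1-x_k^2}}
= \frac{-4\sgn(v)}{(t_j-t_j^{-1})(t_k-t_k^{-1})} = S_{v,v}\,,
\]
so the two prefactors already agree and the claim reduces to the single matrix equality $\tau_v(t^2) = (K_v)^{\mathrm{T}} K_v$. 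I would then confirm this by comparing the matrix of Figure~\ref{fig:minor} entry-by-entry with the outer product: the off-diagonal entries $x_j,\ x_k,\ 1$ match the symmetric sums $(t_j+t_j^{-1})/2$, $(t_k+t_k^{-1})/2$ and $1$ coming from the corner-label products, and the diagonal entries $x_{jk}$ and $2x_jx_k-x_{jk}$ match the products $(t_jt_k)^s$-type and $(t_jt_k^{-1})^s$-type terms after the substitution, using $2x_jx_k - x_{jk} = (t_jt_k^{-1}+t_j^{-1}t_k)/2$. The case $s=-1$ is handled by the same computation, the labels being inverted.

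The main obstacle is purely bookkeeping rather than conceptual: one must check that the factorization respects the convention for coinciding regions. Because $(K_v)^{\mathrm{T}} K_v$ is a rank-one outer product of a single row vector, the operation of adding rows and columns of $\tau_v$ corresponding to identified regions is exactly mirrored by adding the corresponding entries of $K_v$ \emph{before} forming the outer product, which is automatic in the matrix product $K^{\mathrm{T}} S K$ since a single region index labels a single column of $K$. I would note explicitly that this compatibility is what makes the local, crossing-by-crossing verification assemble correctly into the global identity, so no separate argument for the degenerate cases is needed.
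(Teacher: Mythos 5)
Your reduction to a single-crossing identity fails at the crucial step: the claimed equality $\tau_v(t^2) = (K_v)^{\mathrm{T}} K_v$ is false entry by entry. The entries of the outer product $(K_v)^{\mathrm{T}} K_v$ are monomials, while the entries of $\tau_v(t^2)$ are two-term symmetric averages. For instance, with $s=\sgn(v)$, the $(a,b)$ entry of $(K_v)^{\mathrm{T}} K_v$ is $(t_j^{1/2}t_k^{1/2})^s(t_j^{1/2}t_k^{-1/2})^s = t_j^{s}$, whereas the $(a,b)$ entry of $\tau_v(t^2)$ is $x_j = (t_j+t_j^{-1})/2$; likewise the $(a,a)$ entry of the outer product is $(t_jt_k)^s$, not $x_{jk} = (t_jt_k+t_j^{-1}t_k^{-1})/2$. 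So your assertion that the off-diagonal entries of the outer product "match the symmetric sums" is incorrect: a product of two corner labels is a single monomial and cannot equal such an average. The monomials reproduce the entries of $\tau_v(t^2)$ only after symmetrizing under the involution $\varphi\colon t_i \mapsto t_i^{-1}$, which is exactly the paper's local identity \eqref{eq:symmetry}: $(\tau_v(t^2))_{f,g} = \frac{1}{2}\bigl(K_{v,f}K_{v,g}+\varphi(K_{v,f}K_{v,g})\bigr)$.

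Consequently, the factorization $\tau_D(t^2) = K^{\mathrm{T}}SK$ does \emph{not} hold crossing by crossing; it holds only after summing over all crossings, because the antisymmetric discrepancies cancel globally. The local identity yields $\tau_D(t^2) = \frac{1}{2}\bigl(K^{\mathrm{T}}SK + \varphi(K^{\mathrm{T}}SK)\bigr)$, and one must then prove separately that $K^{\mathrm{T}}SK$ is $\varphi$-invariant. This is the genuinely hard part of the paper's argument (its Lemma~\ref{lem:symmetry}, described there as "surprisingly technical"), and it is irreducibly non-local: $\varphi$-invariance of an off-diagonal coefficient requires combining the contributions of the two crossings at the ends of each edge shared by the two regions, and for a diagonal coefficient one must sum over all crossings around the boundary of the region and carry out a delicate sign and orientation bookkeeping (the $d_\beta$ computation), using in the end that the boundary orientation changes direction an even number of times. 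Your proposal omits this entire step, so it does not prove the lemma.
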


We start by proving Corollary~\ref{cor}, before addressing the proof of Lemma~\ref{lem:factorization}.

\begin{proof}[Proof of Corollary~\ref{cor}]
    By Lemma~\ref{lem:factorization}, we have $\widetilde\tau_D(t^2) = \widetilde K^{\mathrm{T}} S \widetilde K$, yielding the equality  $$(\det \widetilde K)^2 = \det(S)^{-1}\det(\widetilde\tau_D(t^2))\,.$$ Since moreover $$\det(S)^{-1} = \prod_v\bigg(-\sgn(v)\frac{t_j-t_j^{-1}}{2}\frac{t_k-t_k^{-1}}{2}\bigg)\,,$$
    Proposition~\ref{prop:conway} implies $(\det \widetilde K)^2 = (t_1-t_1^{-1})^2 \nabla^2_L(t_1,\dots,t_\mu)$, and Corollary~\ref{cor} follows.   
\end{proof}


\begin{proof}[Proof of Lemma~\ref{lem:factorization}]
    Recall that~$K = (K_{v,f})$ is a matrix with rows indexed by the crossings of~$D$ and columns indexed by the regions of~$D$. Let us fix two regions~$f$ and~$g$ of~$D$. By definition, the corresponding coefficient of $K^{\mathrm{T}} S K$ is
    \begin{equation}
    \label{eq:K}
    (K^{\mathrm{T}} S K)_{f,g} = \sum_v \frac{-4\sgn(v)}{(t_j-t_j^{-1})(t_k-t_k^{-1})} K_{v,f}K_{v,g}\,,
    \end{equation}
    while the corresponding coefficient of~$\tau_D(t^2)$ is
        \begin{equation}
        \label{eq:tau'}
        (\tau_D(t^2))_{f,g} = \sum_v \frac{-4\sgn(v)}{(t_j-t_j^{-1})(t_k-t_k^{-1})} (\tau_v(t^2))_{f,g}\,.
            \end{equation}
    In both cases, the sum is over all crossings of~$D$ (and~$j,k$ denote the colors of the strands crossing at~$v$), but the only non-zero contributions come from the crossings adjacent to both~$f$ and~$g$.
    
    Comparing the labels of Figure~\ref{fig:minor} evaluated at~$x_j=\frac{t_j+t_j^{-1}}{2}$ and~$x_{jk}=\frac{t_jt_k+t_j^{-1}t_k^{-1}}{2}$ with the labels of Figure~\ref{fig:labels}, one notices an interesting relation. To state it precisely, let~$\Q(t)$ denote the field of fractions of $\Z[t_1^{\pm 1},\cdots,t_\mu^{\pm 1}]$, and let $\varphi:\Q(t)\rightarrow \Q(t)$ be the involution induced by~$t_i\mapsto t_i^{-1}$ for all~$i$. We claim that the following equality holds:
     \begin{equation}\label{eq:symmetry}
        (\tau_v(t^2))_{f,g} = \frac{K_{v,f}K_{v,g} + \varphi(K_{v,f}K_{v,g})}{2}\,.
    \end{equation}
The proof of this claim is divided into three cases, depending on the relative positions of the regions~$f$ and~$g$.
Let us first assume that~$f$ and~$g$ are two different regions of the same checkerboard color. In such a case, we
have~$K_{v,f}K_{v,g} = 1 = (\tau_v(t^2))_{f,g}$ for each crossing~$v$ incident to both~$f$ and~$g$, so~\eqref{eq:symmetry} holds.
Let us now assume that~$f$ and~$g$ are regions with different checkerboard colors, meeting at a crossing~$v$ with strands of colors~$j$ and~$k$. If~$f$ and~$g$ are adjacent to the strand of color~$j$ (resp.~$k$), we get $K_{v,f}K_{v,g} = t_k^{\pm 1}$ (resp.~$t_j^{\pm 1}$). Since the coefficient of~$\tau_v(t^2)$ is~$x_k = \frac{t_k+t_k^{-1}}{2}$ (resp.~$x_j=\frac{t_j+t_j^{-1}}{2}$), Equation~\eqref{eq:symmetry} holds in this case as well.
Finally, let us assume that~$f=g$. For a crossing~$v$ incident to~$f$, we get either $K_{v,f}^2 = (t_jt_k)^{\pm 1}$ or $K_{v,f}^2 = (t_j^{-1}t_k)^{\pm 1}$, depending on the position of~$f$ around $v$. Similarly, the corresponding coefficient of~$\tau_v(t^2)$ is either $x_{jk}=\frac{t_jt_k+t_j^{-1}t_k^{-1}}{2}$ or $2x_jx_k-x_{jk} = \frac{t_jt_k^{-1}+t_j^{-1}t_k}{2}$, respectively. This concludes the proof of~\eqref{eq:symmetry}.
    
Equations~\eqref{eq:K},~\eqref{eq:tau'} and~\eqref{eq:symmetry} immediately imply the equality
\[
\tau_D(t^2) = \frac{K^{\mathrm{T}} S K + \varphi(K^{\mathrm{T}} S K)}{2}\,,
\]
where~$\varphi$ is applied to matrices coefficientwise. To conclude the proof of Lemma~\ref{lem:factorization}, it remains to check that~$\varphi(K^{\mathrm{T}} S K) = K^{\mathrm{T}} S K$.
    This fact being suprisingly technical, we make it the object of a final separate lemma.
\end{proof}

\begin{lem}\label{lem:symmetry}
    Let~$\Q(t)$ denote the field of fractions of $\Z[t_1^{\pm 1},\cdots,t_\mu^{\pm 1}]$, and let $\varphi:\Q(t)\rightarrow \Q(t)$ be the involution induced by~$t_i\mapsto t_i^{-1}$ for all $i$. Then, we have the equality
    $\varphi(K^{\mathrm{T}} S K) = K^{\mathrm{T}} S K$.
\end{lem}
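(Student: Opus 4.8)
The plan is to establish the equivalent identity $B=\varphi(B)$ for $B:=K^{\mathrm T}SK$. First I would note that $\varphi$ fixes the scalars $S_{v,v}$: each carries two factors of the form $t-t^{-1}$, and since $\varphi(t-t^{-1})=-(t-t^{-1})$ the two sign changes cancel, so $\varphi(S_{v,v})=S_{v,v}$. Consequently $\varphi(B)_{f,g}=\sum_v S_{v,v}\,\varphi(K_{v,f})\varphi(K_{v,g})$, and it is enough to prove, for every pair of regions $f,g$, that $\sum_v \delta_v(f,g)=0$, where $\delta_v(f,g):=S_{v,v}\big(K_{v,f}K_{v,g}-\varphi(K_{v,f}K_{v,g})\big)$. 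As in the proof of Lemma~\ref{lem:factorization}, only crossings incident to both $f$ and $g$ contribute, so each $\delta_v(f,g)$ is a local quantity attached to the crossing $v$.

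Next I would compute these local contributions at a crossing $v$ of colours $(j,k)$ whose four incident regions $a,b,c,d$ are distinct, labelled as in Figure~\ref{fig:minor}. For the two pairs of equal checkerboard colour one has $K_{v,a}K_{v,c}=K_{v,b}K_{v,d}=1$, hence $\delta_v=0$. For the four pairs separated by a single strand the products are monomials $t_j^{\pm\sgn(v)}$ or $t_k^{\pm\sgn(v)}$, and a direct calculation gives the strikingly simple values
\[
\delta_v(a,b)=-\tfrac{4}{t_k-t_k^{-1}},\quad \delta_v(c,d)=+\tfrac{4}{t_k-t_k^{-1}},\quad \delta_v(a,d)=-\tfrac{4}{t_j-t_j^{-1}},\quad \delta_v(b,c)=+\tfrac{4}{t_j-t_j^{-1}},
\]
which depend only on the colour of the separating strand and, remarkably, \emph{not} on $\sgn(v)$. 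For the diagonal entries I would write $\delta_v(f,f)=S_{v,v}(\ell^2-\varphi(\ell^2))$ with $\ell$ the corner label of $f$, and separate the two colours by means of the partial-fraction identity
\[
\frac{t_jt_k-t_j^{-1}t_k^{-1}}{(t_j-t_j^{-1})(t_k-t_k^{-1})}=\frac{t_j}{t_j-t_j^{-1}}+\frac{t_k^{-1}}{t_k-t_k^{-1}}
\]
and its analogues, so that each diagonal contribution splits canonically into a part proportional to $\tfrac{4}{t_j-t_j^{-1}}$ and a part proportional to $\tfrac{4}{t_k-t_k^{-1}}$.

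The organising principle is then to attach every contribution to a \emph{half-edge} (a strand-end) of $D$ rather than to a crossing. For the off-diagonal entries this already settles the matter: the value $\pm\tfrac{4}{t_c-t_c^{-1}}$ carries a sign $-$ when the separating strand-end is \emph{outgoing} at $v$ and $+$ when it is \emph{incoming}, and one checks this rule holds uniformly for both strands. Since every edge of $D$ joins the outgoing half-edge of its strand at one crossing to the incoming half-edge at the next, and the two regions flanking the edge are the same at both ends, the contributions $-\tfrac{4}{t_c-t_c^{-1}}$ and $+\tfrac{4}{t_c-t_c^{-1}}$ cancel edge by edge. Hence all off-diagonal entries of $\sum_v\delta_v$ vanish.

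The main obstacle is the diagonal part. After the colour split above, the diagonal contributions are likewise localised on the strand-ends, but the labelling of Figure~\ref{fig:labels} is not symmetric in the left and right strands, so the two ends of an edge no longer cancel in isolation; one must instead sum along each entire component $L_i$ and around the boundary of each region, using that these are \emph{closed} curves to make the terms telescope to zero. Carrying this out amounts to a careful bookkeeping of orientations and over/under information — it is illuminating here that $\varphi$, which sends each corner label to the label of the antipodal corner, corresponds geometrically to reversing all orientations of the diagram — together with the treatment of the degenerate configurations in which a single region abuts a crossing at two corners, so that $K_{v,f}$ is a sum of two labels. This orientation- and sign-tracking is precisely the ``surprisingly technical'' content that forces the statement to be isolated as a separate lemma.
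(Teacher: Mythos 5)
Your reduction to showing $\sum_v \delta_v(f,g)=0$, where $\delta_v(f,g):=S_{v,v}\bigl(K_{v,f}K_{v,g}-\varphi(K_{v,f}K_{v,g})\bigr)$, is sound, and your first two cases are correct and complete: pairs of regions of the same checkerboard color give $\delta_v=0$, and your computation of the four off-diagonal differences (values $\pm\frac{4}{t_c-t_c^{-1}}$ depending only on the color $c$ of the separating strand, with sign governed by whether that strand-end is incoming or outgoing at $v$, independently of $\sgn(v)$) is accurate and yields the edge-by-edge cancellation. This is in substance the paper's own argument for that case, phrased slightly more cleanly through the $\varphi$-difference.

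The problem is the diagonal case, which is the actual crux of the lemma and the reason the paper calls it ``surprisingly technical'': your proposal does not prove it, but only describes a plan whose decisive step is asserted rather than established. You propose to split each $\delta_v(f,f)$ by partial fractions and attach the pieces to half-edges, but you yourself concede that the two ends of a boundary edge then fail to cancel in isolation, and you fall back on the unverified claim that summing along closed curves makes everything ``telescope to zero.'' Note moreover that the splitting is not canonical: for instance $\frac{t_jt_k^{-1}-t_j^{-1}t_k}{(t_j-t_j^{-1})(t_k-t_k^{-1})}$ equals both $\frac{t_k}{t_k-t_k^{-1}}-\frac{t_j}{t_j-t_j^{-1}}$ and $\frac{t_k^{-1}}{t_k-t_k^{-1}}-\frac{t_j^{-1}}{t_j-t_j^{-1}}$, the two differing via $\frac{t}{t-t^{-1}}-\frac{t^{-1}}{t-t^{-1}}=1$, so attaching contributions to half-edges requires choices, and it is unclear that any choice telescopes. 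The paper's argument for the diagonal entry is of a different, global nature: it clears denominators, expands the result as $\sum_{\beta\in\{\pm1\}^n}c_\beta\,t_1^{\beta_1}\cdots t_n^{\beta_n}$, writes $c_\beta=\beta_1\cdots\beta_n d_\beta$, proves that the effect on $d_\beta$ of flipping a single coordinate of $\beta$ is invariant under $\beta\mapsto-\beta$ (so that $d_\beta-d_{-\beta}$ is independent of $\beta$), and then checks $d_\beta=d_{-\beta}$ at the single point $\beta=(1,\dots,1)$, where it reduces to the parity fact that the boundary-edge orientation changes sign an even number of times around the closed region boundary, half in each direction. Until you supply an argument of comparable substance for $\sum_v\delta_v(f,f)=0$ (including the degenerate configurations where $f$ abuts a crossing from two sides), your proof has a genuine gap.
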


\begin{proof}
    We have to show that, for any two regions~$f$ and~$g$ of~$D$, the coefficient $$(K^{\mathrm{T}} S K)_{f,g} = \sum_v \frac{-4\sgn(v)}{(t_j-t_j^{-1})(t_k-t_k^{-1})} K_{v,f}K_{v,g}$$
    is invariant under $\varphi$, where the sum is taken over all crossings adjacent to both~$f$ and~$g$.
    We will consider several cases, according to
    the relative positions of~$f$ and~$g$ with respect to the crossings.
    
    First of all, if~$f$ and~$g$ are two regions of the same checkerboard color, each common crossing~$v$ contributes a term $\frac{-4\sgn(v)}{(t_j-t_j^{-1})(t_k-t_k^{-1})}$ to the coefficient~$(K^{\mathrm{T}} S K)_{a,b}$. Since all these terms are invariant under~$\varphi$, this case is checked.
    
    Next, suppose that~$f$ and~$g$ have different checkerboard colors. Each edge of~$D$ adjacent to both regions gives two contributions to the sum, one for each crossing adjacent to the edge. So, let us consider a common edge with endpoints~$v$ and~$v'$, and suppose without loss of generality that the colors and orientations of the strands are as in Figure~\ref{fig:symmetry}, left. The two contributions sum up to
    $$\frac{-4st_j^{-s}}{(t_i-t_i^{-1})(t_j-t_j^{-1})} + \frac{-4s't_k^{s'}}{(t_i-t_i^{-1})(t_k-t_k^{-1})}\,,$$ where $s = \sgn(v)$ and~$s'=\sgn(v')$. Proving that the term displayed above is invariant under~$\varphi$ is clearly equivalent to showing that
    $$G := st_j^{-s}(t_k-t_k^{-1}) + s't_k^{s'}(t_j-t_j^{-1})$$
    satisfies~$\varphi(G) = -G$.
    Expanding the products and denoting by~$\chi$ the characteristic function, one checks that~$G$ is equal to
    \begin{equation*}
       (t_jt_k - t_j^{-1}t_k^{-1}) (\chi_{s'=1}-\chi_{s=-1}) + (t_jt_k^{-1} - t_j^{-1}t_k) (\chi_{s=-1}-\chi_{s'=-1})\,,
    \end{equation*} 
    which is clearly antisymmetric, thus finishing this case.

    Finally, let us consider the diagonal coefficient corresponding to a region~$f$. Suppose that, when moving around the boundary of~$f$ counterclockwise, one encounters~$n$ crossings $v_1, \dots, v_n$ of respective
    signs~$s_1,\dots,s_n$. (It can happen that~$f$ abuts the same crossing from two sides, but since the
corresponding labels are added, our computations remain valid in this case.)
    Let us also number the edges of the boundary from~$1$ to~$n$ as in Figure~\ref{fig:symmetry}, right. To each edge, we assign a sign~$\varepsilon_i\in\{\pm 1\}$, where~$\varepsilon_i = 1$ if the edge~$i$ is oriented coherently with the counterclockwise orientation of the boundary of~$f$, and~$\varepsilon_i = -1$ otherwise. Without loss of generality, we can assume that all the edges have different colors, that we also denote by~$1,\dots,n$; in the general case, if two colors coincide, one simply needs to identify the corresponding variables in the following computations, a transformation which does not affect the symmetry.

 \begin{figure}
        \centering
\begin{picture}(220,100)
        \put(-1,32){$j$}
        \put(28,0){$k$}
        \put(70,0){$i$}
        \put(19,59){$\scriptstyle v$}
        \put(49,28){$\scriptstyle v'$}
        \put(15,20){$f$}
        \put(45,45){$g$}
        \put(0,0){\includegraphics[width = 8cm]{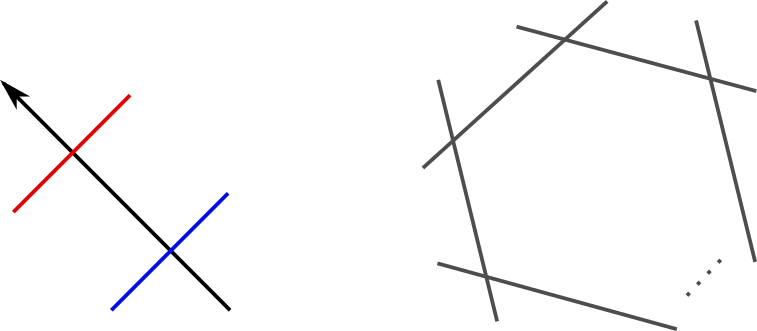}}
        \put(163,93){$\scriptstyle{v_1}$}
        \put(125,59){$\scriptstyle v_2$}
        \put(136,10){$\scriptstyle v_3$}
        \put(215,80){$\scriptstyle v_n$}
        \put(190,85){$1$}
        \put(146,73){$2$}
        \put(135,31){$3$}
        \put(165,0){$4$}
        \put(222,50){$n$}
        \put(180,40){$f$}
        \end{picture}
        \caption{The conventions in the proof of Lemma~\ref{lem:symmetry}.}
        \label{fig:symmetry}
    \end{figure}

    With these notations and the help of Figure~\ref{fig:labels}, one computes $$(K^{\mathrm{T}} S K)_{f,f} = \sum_{i=1}^n \frac{-4s_i(t_i^{\varepsilon_{i+1}}t_{i+1}^{-\varepsilon_i})^{s_i}}{(t_i-t_i^{-1})(t_{i+1}-t_{i+1}^{-1})}\,.$$
    As before, proving that this coefficient is invariant under~$\varphi$ is equivalent to showing that
    $$G := \prod_{j=1}^n (t_j-t_j^{-1}) \sum_{i=1}^n \frac{s_i(t_i^{\varepsilon_{i+1}}t_{i+1}^{-\varepsilon_i})^{s_i}}{(t_i-t_i^{-1})(t_{i+1}-t_{i+1}^{-1})}$$ satisfies~$\varphi(G) = (-1)^n G$. Expanding as a sum of monomials, we obtain
  \begin{align*}
         G  &= 
         \sum_{i=1}^n \bigg(\prod_{j\neq i,i+1} (t_j-t_j^{-1})\bigg)s_i(t_i^{\varepsilon_{i+1}}t_{i+1}^{-\varepsilon_i})^{s_i} = \sum_{i=1}^n \sum_{\alpha\in\{\pm 1\}^{n-2}} \bigg(\prod_{j\neq i,i+1} \alpha_j t_j^{\alpha_j}\bigg) s_i(t_i^{\varepsilon_{i+1}}t_{i+1}^{-\varepsilon_i})^{s_i}\\
             &= \sum_{\beta\in\{\pm 1\}^n} c_\beta t_1^{\beta_1}\cdots t_n^{\beta_n}
    \end{align*}
    for some coefficient~$c_\beta$. To compute these coefficients explicitly, let us define for each~$\beta\in\{\pm 1\}^n$ the (possibly empty) set of indices $I_\beta = \{i\in\{1,\dots,n\} \mid \beta_i =\varepsilon_{i+1}s_i, \beta_{i+1} = -\varepsilon_i s_i\}$. We then have
    \begin{equation*}
         c_\beta  = \sum_{i\in I_\beta} \beta_1\dots\beta_{i-1}s_i\beta_{i+2}\dots\beta_n = \sum_{i\in I_\beta} s_i\beta_i\beta_{i+1} (\beta_1\dots\beta_n) = \beta_1\dots\beta_n\sum_{i\in I_\beta} s_i\beta_i\beta_{i+1} =  \beta_1\dots\beta_n d_\beta\,,
    \end{equation*}
    with $d_\beta = \sum_{i\in I_\beta} s_i\beta_i\beta_{i+1}$.
The desired equality~$\varphi(G) = (-1)^n G$ is equivalent to~$c_{-\beta} = (-1)^n c_\beta$ for all~$\beta\in\{\pm 1\}^n$, which in turns is equivalent to~$d_{-\beta} = d_\beta$.

Therefore, we are left with the proof of the equality~$d_{-\beta} = d_\beta$ for all~$\beta\in\{\pm 1\}^n$.
Given any such~$\beta$ and any index~$i\in\{1,\dots,n\}$, define~$\tilde\beta$ by~$\tilde\beta_i = -\beta_i$ and~$\tilde\beta_j = \beta_j$ for~$j\neq i$. A straightforward but slightly cumbersome computation yields
    \begin{equation*}
         d_\beta - d_{\tilde\beta}=
        \begin{cases}
            -\varepsilon_i\beta_i, & \text{ if $(\beta_{i-1},\beta_{i+1}) = (-\varepsilon_i s_{i-1},-\varepsilon_i s_i)$}\\
            \varepsilon_i\beta_i, & \text{ if $(\beta_{i-1},\beta_{i+1}) = (\varepsilon_i s_{i-1},\varepsilon_i s_i)$}\\
            0, & \text{ otherwise. }
            \end{cases}
    \end{equation*}
    This expression is invariant if we replace~$\beta$ by~$-\beta$. It thus follows that, for any two~$\beta,\beta'\in\{\pm 1\}^n$, we have~$d_\beta - d_{\beta'} = d_{-\beta} - d_{-\beta'} $. To prove the equality~$d_{-\beta} = d_\beta$ for all~$\beta$, we therefore only need to check it for a single~$\beta$. Taking~$\beta = (1,\dots,1)$, we get
\[
d_\beta - d_{-\beta} = \sum_{i: (\varepsilon_i,\varepsilon_{i+1}) = (-s_i,s_i)} s_i - \sum_{i: (\varepsilon_i,\varepsilon_{i+1}) = (s_i,-s_i)} s_i = \sum_{i : \varepsilon_i\neq\varepsilon_{i+1}} \varepsilon_{i+1} = 0 \,,
\]
since there is an even number of crossings at which~$\varepsilon_i$ changes sign, going from~$1$ to~$-1$ in exactly half of the cases and from~$-1$ to~$1$ in the others.
\end{proof}

\subsection{The Alexander module}
\label{sub:module}

We conclude this article with a slightly informal discussion on yet another abelian link invariant,
namely the Alexander module (recall its definition from Section~\ref{sub:Conway}).

The question we address is whether it is possible to obtain a presentation of (the square of) the Alexander
module~$\mathcal{A}_L$ of a~$\mu$-colored link~$L$ from the matrix~$\widetilde\tau_D(x)$ associated
to a colored diagram~$D$ for~$L$. As we will see, the answer is positive in the case~$\mu=1$, but not in general.

\medskip

First, recall that~$\mathcal{A}_L$ does not admit a square presentation matrix over~$\Lambda=\Z[t_1^{\pm 1},\dots,t_\mu^{\pm 1}]$ if~$\Delta_L\neq 0$ and~$\mu\ge 4$~\cite{CroStr69}.
For this reason alone, there is no hope of answering the above question positively in general.
However, the module~$\mathcal{A}_L$ does admit a square presentation matrix over the localised ring
\[
\Lambda_S:=\Z[t_1^{\pm 1},\dots,t_\mu^{\pm 1},(t_1-1)^{-1},\dots,(t_\mu-1)^{-1}]\,.
\]
Moreover, by Corollary~3.6 of~\cite{CimFlo08}, the generalized Seifert matrices can be used to compute such a square presentation matrix. Therefore, it is natural to hope that the strategy developed in this work could be applied to this invariant as well. However, since the change of variables~$2x_j=t_j^{1/2}+t_j^{-1/2}$ makes use
of fractional powers of the variables, we will need to work over the slightly larger ring
\[
\Lambda'_S:=\Z[t_1^{\pm 1/2},\dots,t_\mu^{\pm 1/2},(t_1-1)^{-1},\dots,(t_\mu-1)^{-1}]\,.
\]

In the case~$\mu=1$, this program can be carried out, yielding the following result.
Let~$D$ be a connected diagram for an oriented link~$L$. As one easily checks, the coefficients of
the matrix~$\tau_D(x)$ are polynomials in~$2x=:y$. Let~$\mathcal{M}_D$ denote the~$\Z[y]$-module
presented by the matrix~$\widetilde\tau_D(y)$. Then, we have an isomorphism of~$\Lambda'_{S}$-modules
\[
\mathcal{M}_D\otimes_{\Z[y]}\Lambda'_{S}\simeq\mathcal{A}_L^{\oplus 2}\otimes_\Lambda\Lambda'_{S}\,,
\]
where~$\Lambda'_S=\Z[t^{\pm 1/2},(t-1)^{-1}]$ is a~$\Z[y]$-module via the ring homomorphism~$\Z[y]\to\Lambda'_S$ mapping~$y$ to~$t^{1/2}+t^{-1/2}$,
and a~$\Lambda$-module via the natural inclusion~$\Lambda\to\Lambda'_{S}$.
Less formally, one can say that the matrix~$\widetilde\tau_D(y)$ is a presentation matrix of~$\mathcal{A}_L^{\oplus 2}$ over~$\Lambda'_S$ via the substitution~$y=t^{1/2}+t^{-1/2}$.
Moreover, if~$L=K$ is a knot, then the multiplication by~$(t-1)$ is invertible in~$\mathcal{A}_K$.
As a consequence, the matrix~$\widetilde\tau_D(y)$ presents~$\mathcal{A}_K^{\oplus 2}$
over the ring~$\Z[t^{\pm 1/2}]$.

\medskip

Unfortunately, these results do not carry over to the case~$\mu>1$. Indeed,
let~$D$ be a~$\mu$-colored diagram for a~$\mu$-colored link~$L$, and assume that
each pair of colors meet in~$D$. Then, using Corollary~3.6 of~\cite{CimFlo08}, it is possible to prove that
the matrix~$\widetilde\tau_D(x)$ presents the Alexander module of~$L\#_1rL$ over
the ring
\[
\Z[{\textstyle{\frac{1}{2}}},t_1^{\pm 1/2},\dots,t_\mu^{\pm 1/2},(t_1-1)^{-1},\dots,(t_\mu-1)^{-1}]
\]
under the substitutions~$x_j=\frac{t_j+t_j^{-1}}{2}$ and~$x_{jk}=\frac{t_jt_k+t_j^{-1}t_k^{-1}}{2}$.
However, the isomorphism
\[
\mathcal{A}_{L\#_1 L'}\simeq \mathcal{A}_{L}\oplus \mathcal{A}_{L'}
\]
that we used in the case~$\mu=1$ is no longer valid in general for~$\mu>1$.
In other words, the addivity under connected sum enjoyed by the other abelian invariants considered in this work does not extend to the Alexander module in general.
  
\bibliographystyle{plain}
\bibliography{Bibliography}
	
\end{document}